\newtheorem{theorem}{Theorem}[section]
\newtheorem{lemma}[theorem]{Lemma}
\newtheorem{corollary}[theorem]{Corollary}
\newtheorem{claim}[theorem]{Claim}
\theoremstyle{definition}
\newtheorem{definition}[theorem]{Definition}
\newtheorem{notation}[theorem]{Notation}
\newtheorem{remark}[theorem]{Remark}
\DeclareMathOperator{\reg}{\rm{reg}}
\DeclareMathOperator{\wt}{\rm{wt}}
\title[Projective monomial curves]{Projective monomial curves associated to numerical semigroups with multiplicity $e$, width $e-1$, and embedding dimension $e-2$}
\author{Om Prakash Bhardwaj}
\address{Chennai Mathematical Institute, Siruseri, Tamilnadu~603103. India}
\email{omprakash@cmi.ac.in, opbhardwaj95@gmail.com}
\author{Trung Chau}
\address{Chennai Mathematical Institute, Siruseri, Tamilnadu~603103. India}
\email{chauchitrung1996@gmail.com}
\author{Omkar Javadekar}
\address{Chennai Mathematical Institute, Siruseri, Tamilnadu~603103. India}
\email{omkarj@cmi.ac.in, omkarjavadekar@gmail.com}
\keywords{Numerical Semigroup, Projective Monomial Curve, Gr\"obner Basis, Castelnuovo--Mumford Regularity}
\subjclass[2020]{13P10, 13F55, 13D02, 20M14}
\begin{document}

\begin{abstract}
    Numerical semigroups with multiplicity $e$, width $e-1$, and embedding dimension $e-2$ are of the form
    $$S(e,m,n) = \langle \{e, e+1, \ldots, 2e-1\} \setminus \{e+m, e+n\} \rangle,$$
for some $1 \leq m < n \leq e-2$. 
Inspired by the work of Sally, Herzog and Stamate studied the special case $S(e,2,3)$, which they called the ``Sally numerical semigroups''. Recently, Dubey  et. al. computed a minimal generating set of the defining ideal of the numerical semigroups $S(e,m,n)$ for $m \geq 2$. In this article, we first obtain an analog for the numerical semigroups $S(e,1,n)$, and then shift our focus to the projective monomial curves in $\mathbb{P}^{e-2}$ defined by the semigroups $S(e,m,n)$. 
We obtain a Gr\"{o}bner basis for the defining ideal of the projective monomial curves associated to the semigroups $S(e,m,n)$. Moreover, we provide characterizations of Cohen--Macaulay and Gorenstein properties of these curves. Specifically, we prove that these are Cohen--Macaulay if and only if $(m,n) \neq (e-4,e-3)$, and Gorenstein if and only if $(e,m,n)\in \{ (4,1,2), (5,2,3)\}$. Furthermore, when these curves are Cohen--Macaulay, we compute the Castelnuovo--Mumford regularity of their coordinate ring.\\

\end{abstract}

\maketitle

\section{Introduction}

{{Let $S$ be a numerical semigroup with minimal generators $a_0 < a_1 < \cdots < a_p$. Let $\mathbb{K}$ be a field. Consider the $\mathbb{K}$-algebra homomorphism $\phi: \mathbb{K}[x_0, \ldots, x_p] \longrightarrow \mathbb{K}[S] \coloneqq \mathbb{K}[t^{a_0}, \ldots, t^{a_p}]$ where $\phi(x_i) = t^{a_i}$ for all $i$. The kernel of $\phi$ is the defining ideal of the affine curve $\mathcal{C}_S$ in $\mathbb{A}_{\mathbb{K}}^{p+1}$ parametrized by $(t^{a_0}, t^{a_1}, \ldots, t^{a_p})$. The projective closure of the affine curve $\mathcal{C}_S$ is the projective monomial curve in projective space $\mathbb{P}_{\mathbb{K}}^{p+1}$ defined parametrically by 
$$
x_0 = t_1^{a_0}t_2^{a_p-a_0}; \quad x_1 = t_1^{a_1}t_2^{a_p-a_1}; \quad  \ldots \quad ; \quad  x_{p-1} = t_1^{a_{p-1}}t_2^{a_p - a_{p-1}}; \quad x_p = t_1^{a_p}; \quad y = t_2^{a_p},
$$
where $[x_0:x_1:\cdots:x_p:y]$ denote the homogeneous coordinates in $\mathbb{P}_{\mathbb{K}}^{p+1}$. The affine monomial curves are always Cohen--Macaulay, since their coordinate ring is a one-dimensional domain. There is also a well-known criterion for the Gorensteinness of affine monomial curves due to Kunz~\cite{Kunz}. Specifically, he showed that Gorensteinness is equivalent to the numerical semigroup being symmetric. However, the situation for projective monomial curves is not so straightforward. It is not always true that the projective closure of an affine monomial curve is Cohen--Macaulay. 
The first example of a non--Cohen--Macaulay projective monomial curve was given by Macaulay~\cite{Macaulay}, parametrized by $(t_1^3t_2, t_1t_2^3, t_1^4, t_2^4)$.

Over the past few decades, there has been substantial progress in characterizing important properties such as the Cohen--Macaulayness, Gorensteinness, and complete intersection nature of projective monomial curves. Alongside these structural properties, invariants such as the Cohen--Macaulay type and the Castelnuovo--Mumford regularity have been studied extensively (see, e.g., \cite{Bermejo-arithmetic, Bermejo-CM-reg, Bermejo-Giminez-reg, Bermejo-CM-reg-in-codim-2, Bresinsky-Schenzel-Vogel, Morales, Hoa-Trung, Goto, Herzog-Hibi-reg, Herzog-Stamate-CM, Reid-2, Reid-1, Pranjal-Sengupta}). In this article, we consider the projective monomial curves associated with numerical semigroups having multiplicity $e$, width $e-1$, and embedding dimension $e-2$. The multiplicity is the smallest non-zero integer in the semigroup, the width is the difference between the largest and the smallest minimal generators, and the embedding dimension is the number of minimal generators of the numerical semigroup. In other words, the numerical semigroups having multiplicity $e$, width $e-1$, and embedding dimension $e-2$ are exactly of the form
$$S(e,m,n) = \langle \{e, e+1, \ldots, 2e-1\} \setminus \{e+m, e+n\} \rangle,$$
for some $1 \leq m < n \leq e-2$. 

{{In 1980, Sally~\cite{Sally} proved that the associated graded ring of a one-dimensional Gorenstein local ring with multiplicity $e$ and embedding dimension $e-2$ is Cohen--Macaulay. Herzog and Stamate~\cite{Herzog-Stamate} later showed that the numerical semigroup $S(e,2,3)$ defines a Gorenstein semigroup ring satisfying Sally's conditions; such semigroups are called \textit{Sally semigroups}. More recently, Dubey et. al. initiate the study of $S(e,m,n)$ for $m\geq 2$, which they call \emph{numerical semigroups of Sally type}. Specifically, they determined a minimal generating set for the defining ideal of the corresponding semigroup rings and characterized their Gorenstein property. In this article, we extend these results to the cases where $m=1$. In addition, we study the homogenization of these defining ideals, which correspond to projective monomial curves, together with a characterization for their Cohen--Macaulay and Gorenstein properties (we refer to \cref{sec:prelim} for unexplained terminology). Our technique relies on finding the Gr\"obner bases of these defining ideals, the Cohen--Macaulay criterion of projective monomial curves by Herzog and Stamate (\cite[Proposition~3.15]{Herzog-Stamate-CM}), and the well-known fact that the Hilbert function of Gorenstein Artinian standard graded $\mathbb K$-algebras is symmetric (\cite[Theorem 4.1]{Stanley}). We also compute the Castelnuovo--Mumford of these homogenization using a formula for the regularity of finite-length modules over standard graded polynomial rings (\cite[Exercise 20.18]{Eis-book}). 
}}

This article is structured as follows. In \Cref{sec:prelim} we set up the notation, and collect the definitions and known results that are used in the rest of the article. \Cref{sec:m=1-case} focuses on the numerical semigroups of the form $S(e,1,n)$. We provide the generating set for the defining ideal of such semigroups (see \Cref{thm:gens-for-m=1}), and characterize the Gorenstein ones among them (see \Cref{thm:gor-for-m=1}). In \Cref{sec:GB-CM}, we compute the Gr\"obner basis of the defining ideal of $\overline{S(e,m,n)}$ for all $e\geq 10$ by computing the Gr\"obner basis for the defining ideal of $S(e,m,n)$ (see \Cref{thm:GB-proof-general} and \Cref{cor:GB-for-projective}). With the help of the Gr\"obner basis obtained, we characterize the Cohen--Macaulay and Gorenstein properties of these curves (see \Cref{cor:CM-characterization} and \Cref{thm:Gor}). Finally, when the projective monomial curves defined by $S(e,m,n)$ are Cohen-Macaulay, we  calculate the Castelnuovo--Mumford regularity their coordinated ring (see \Cref{cor:CM-characterization}). 

\section*{Acknowledgement}
The authors would like to thank Prof.~Hema Srinivasan for useful discussions on Sally numerical semigroups. The three authors are partially supported by the Infosys Foundation.

\section{Preliminaries}\label{sec:prelim}
Throughout this article $\mathbb K$ denotes an algebraically closed field of characteristic zero, and $\mathbb N$ is the set of all non-negative integers.

\begin{definition}
Let $S$ be a submonoid of $\mathbb{N}$ such that $\mathbb{N} \setminus S$ is finite. Then $S$ is called a \textit{numerical semigroup}. Equivalently, there exist $a_0,\ldots,a_p \in \mathbb{N}$ such that $\mathrm{gcd}(a_0,\ldots,a_p) = 1$ and 
\[
S =  \langle a_0, \ldots, a_p \rangle = \left\lbrace \sum_{i=0}^{p} \lambda_i a_i \mid \lambda_i \in \mathbb{N}, \text{\ for\  all\ } i \right\rbrace.
\]
\end{definition}

Every numerical semigroup $S$ has a unique minimal generating set. The cardinality of the minimal generating set of $S$ is called the \textit{embedding dimension of $S$}. The smallest non-zero element inside $S$ is called the \textit{multiplicity} of $S$, and the difference between the largest and the smallest minimal generator is called the \textit{width} of $S$, and it is denoted by $\operatorname{wd}(S)$.
Since $\mathbb{N}\setminus S$ is finite,  $\mathbb{N}\setminus S$ has a maximum, called the \textit{Frobenius number} of $S$, and it is denoted by $F(S)$. A numerical semigroup $S$ is called \textit{symmetric} if for all $s \in \mathbb{Z}\setminus S$ we have $F(S)-s \in S$.

 Let ${\mathbf{a}}_0,{\mathbf{a}}_1, \ldots , {\mathbf{a}}_p \in \mathbb{N}^r$ then
\[S = \langle {\mathbf{a}}_0,{\mathbf{a}}_1, \ldots ,{\mathbf{a}}_p \rangle = \left\lbrace \sum_{i=0}^{p} \lambda_i {\mathbf{a}}_i \mid \lambda_i \in \mathbb{N}, \text{\ for\  all\ } i \right\rbrace\]
is called an \textit{affine semigroup} generated by  ${\mathbf{a}}_0,{\mathbf{a}}_1, \ldots,  {\mathbf{a}}_p$. Every affine semigroup $S$ has a unique minimal generating set. For $r=1$, affine semigroups correspond to numerical semigroups. Let $\mathbb K$ be a field, the semigroup ring $\mathbb K[S]:=\oplus_{s \in S} \mathbb K {\bf t}^s $ of $S$ is a $\mathbb K$-subalgebra of the polynomial ring $\mathbb K[t_1,\ldots,t_r]$, where $t_1,\ldots,t_r$ are indeterminates and ${\bf t}^s = \prod_{i=1}^r t_i^{s_i}$, for all $s = (s_1,\ldots,s_r) \in S$. The semigroup ring $\mathbb K[S] = \mathbb K[{\bf t}^{{\mathbf{a}}_0},{\bf t}^{{\mathbf{a}}_1}, \ldots, {\bf t}^{{\mathbf{a}}_p}]$ of $S$ can be represented as a quotient of a polynomial ring using a canonical surjection $\pi : \mathbb K[x_0,x_1,\ldots,x_p] \rightarrow \mathbb K[S]$, given by $\pi(x_i) = {\bf t}^{{\mathbf{a}}_i}$ for all $i=0,1,\ldots,p.$ The kernel of the map $\pi$ is called the \textit{defining ideal} of $S$, and it is denoted by $I_S$. Define the \textit{weight of $x_i$} as $\wt(x_i) = {\mathbf{a}}_i$ for $0 \leq i \leq p$. %With this grading $\mathbb{K}[x_0,\ldots,x_p]$ is an $S$-graded ring. 
For a monomial ${\bf{x}^u}:= x_1^{u_1}x_2^{u_2}\cdots x_n^{u_n}$, define $\wt( {\bf{x}^u}) = \sum_{i=1}^n u_i{\mathbf{a}}_i$. By a well known fact (e.g. see \cite[Theorem 7.3]{Miller-Sturmfels}), we have 
$$
I_S = ({\bf{x}^u}-{\bf{x}^v} \mid \wt ({\bf{x}^u}) = \wt({\bf{x}^v})).
$$
Thus, here $I_S$ is a binomial prime ideal (toric ideal) of $\mathbb{K}[x_0,\ldots,x_p]$ which is homogeneous with respect to the grading defined by $\wt(-)$. Note that the weight function $\wt(-)$ depends on the semigroup $S$. However, in our work, since the semigroup under consideration will always be clear from the context, for the sake of brevity, we shall not mention the semigroup $S$ eplicitly while talking out $\wt(-)$. When $S$ is a numerical semigroup, i.e., when $r=1$, we have the following result due to Gastinger. 

\begin{theorem}[see {\cite[Theorem 4.8]{Eto}}]\label{thm:dim}
    Let $S$ be a numerical semigroup minimally generated by $a_0, \ldots, a_p$, and $J \subseteq I_S$ be an ideal of $\mathbb K[x_0, \ldots, x_p]$. Then $J = I_S$ if and only if
    $$\mathrm{dim}_{\mathbb{K}} \left(\frac{\mathbb K[x_0, \ldots, x_p]}{J + (x_i)}\right)  = a_i.$$
\end{theorem}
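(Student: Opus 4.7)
The plan is to translate the problem through the canonical isomorphism $R/I_S \cong \mathbb K[S]$, where $R = \mathbb K[x_0,\ldots,x_p]$, and then combine an Apéry-set dimension count with a Nakayama-type argument.

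For the forward direction, I would first observe that under $\pi$ the class of $x_i$ in $R/I_S$ is $t^{a_i}$, so $R/(I_S + (x_i)) \cong \mathbb K[S]/(t^{a_i}\mathbb K[S])$. Using the $\mathbb K$-basis $\{t^s : s \in S\}$ of $\mathbb K[S]$, this quotient has $\mathbb K$-basis indexed by the Apéry set $\operatorname{Ap}(S,a_i) = \{s \in S : s - a_i \notin S\}$. Since $\operatorname{Ap}(S,a_i)$ contains exactly the smallest element of $S$ in each residue class modulo $a_i$, its cardinality is $a_i$, which yields the forward implication.

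For the converse, set $K = I_S/J$, viewed as a finitely generated graded $R$-module with respect to the weight grading (where $J$ is taken to be weight-homogeneous, as will be the case in all intended applications of the theorem). From the short exact sequence $0 \to K \to R/J \to R/I_S \to 0$, tensoring with $R/(x_i)$ produces
$$\operatorname{Tor}_1^R(R/I_S,\, R/(x_i)) \longrightarrow K/x_iK \longrightarrow R/(J+(x_i)) \longrightarrow R/(I_S+(x_i)) \longrightarrow 0.$$
Since $R/I_S \cong \mathbb K[S]$ is a domain and $x_i \notin I_S$, multiplication by $x_i$ is injective on $R/I_S$, so the leftmost Tor vanishes. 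Taking $\mathbb K$-dimensions in the resulting three-term short exact sequence, the hypothesis and the forward direction give $\dim_{\mathbb K} K/x_iK = a_i - a_i = 0$.

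The final step is to deduce $K = 0$ from $K = x_iK$ via a minimum-weight argument: a nonzero homogeneous element of $K$ of smallest weight $d$ would have to equal $x_i \cdot k'$ with $\deg k' = d - a_i < d$, contradicting minimality. Thus $J = I_S$. The main obstacle is the Nakayama-type last step, which is where the weight-homogeneity of $J$ is essential; all the other ingredients (the Apéry-set cardinality and the Tor vanishing over the polynomial ring) are standard and routine.
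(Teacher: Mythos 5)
The paper offers no proof of this statement: it is imported from the literature as Gastinger's theorem (via Eto), so there is no in-paper argument to measure yours against. Judged on its own, your proof is correct \emph{under the hypothesis you inserted}, namely that $J$ is homogeneous for the weight grading. The Ap\'ery-set count correctly gives $\dim_{\mathbb K} R/(I_S+(x_i)) = a_i$; the Tor computation is valid because $x_i$ is a nonzerodivisor on the domain $R/I_S$, and it yields the exact sequence $0 \to K/x_iK \to R/(J+(x_i)) \to R/(I_S+(x_i)) \to 0$, whence $K = x_iK$; and graded Nakayama (weights are nonnegative and $\wt(x_i)=a_i>0$) then kills $K$. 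A small remark: the Tor step can be replaced by the more elementary observation that the surjection $R/(J+(x_i)) \twoheadrightarrow R/(I_S+(x_i))$ between spaces of equal finite dimension is an isomorphism, so $I_S \subseteq J+(x_i)$, and primeness of $I_S$ together with $x_i \notin I_S$ gives $I_S = J + x_iI_S$, i.e.\ $K = x_iK$.

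Your parenthetical caveat is not a removable convenience: the statement as printed, for an arbitrary ideal $J \subseteq I_S$, is false. Take $S = \langle 2,3\rangle$, so $I_S = (x_0^3-x_1^2) \subseteq \mathbb K[x_0,x_1]$ with $a_0=2$, $a_1=3$, and let $J = \bigl((x_0^3-x_1^2)(1+x_0x_1)\bigr)$. Then $J \subsetneq I_S$ since $1+x_0x_1$ is a non-unit, yet $J+(x_0) = (x_0,x_1^2)$ and $J+(x_1)=(x_1,x_0^3)$, so $\dim_{\mathbb K} R/(J+(x_i)) = a_i$ for both $i$. Here $K = I_S/J \cong R/(1+x_0x_1)$ satisfies $K = x_iK$ without vanishing --- precisely the point where your Nakayama step is the only thing standing between $K=x_iK$ and $K=0$, and where it genuinely needs the grading. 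So some hypothesis such as weight-homogeneity of $J$ must be read into the statement; it holds automatically whenever $J$ is generated by binomials lying in $I_S$ (any such binomial is forced to be weight-homogeneous), which covers every application in this paper, e.g.\ the ideal $I_{e,n}$ in the proof of \Cref{thm:gens-for-m=1}. With that hypothesis made explicit, your proof is complete and is the standard one.
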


\medskip

Let $S$ be a numerical semigroup minimally generated by $a_0 < a_1 < \cdots < a_p$. Let $\mathcal{C}$ be a projective monomial curve in the projective space $\mathbb{P}_K^{p+1}$, defined parametrically by
\begin{center}
$x_0 = t_1^{a_0}t_2^{a_p-a_0}; \quad x_1 = t_1^{a_1}t_2^{a_p-a_1}; \quad  \ldots \quad ; \quad  x_{p-1} = t_1^{a_{p-1}}t_2^{a_p - a_{p-1}}; x_p = t_1^{a_p}; \quad y = t_2^{a_p}$.
\end{center}

We call $\mathcal{C}$ the \textit{projective monomial curve defined by $S$}. Let $\mathbb K[\mathcal{C}]$ denote the coordinate ring of $\mathcal{C}$. Then 
$\mathbb K[\mathcal{C}] \cong \mathbb K[\bar{S}] = \dfrac{\mathbb{K}[x_0, \ldots, x_p, y]}{I_{\bar S}}$, where $\bar{S} = \langle  (a_0,a_p-a_0), (a_1,a_p- a_1),\ldots, (a_{p-1},a_p-a_{p-1}), (a_p,0), (0,a_p) \rangle$ is an affine semigroup in $\mathbb{N}^2$. The defining ideal $I_{\bar S}$ of $\bar S$ is in fact the homogenization of $I_S$ with respect to the variable $y$. 

It is important to note that if $I_S=(f_1, 
\ldots, f_r)$, then $I_{\bar S}$ need not be generated by the homogenizations $f_1^h, \ldots, f_r^h$ of $f_1, \ldots, f_r$. However, if $f_1, \ldots, f_r$  is a Gr\"obner basis of $I_S$ with respect to the graded reverse lexicographic order on $\mathbb K[x_0, \ldots, x_p]$, then $f_1^h, \ldots, f_r^h$ is a Gr\"obner basis of $I_{\bar S}$ with respect to the graded reverse lexicographic order on  $\mathbb K[x_0, \ldots, x_p, y]$ (see \cite[Proposition 3.15]{Ene-Herzog}). 

Herzog and Stamate \cite{Herzog-Stamate-CM} provided the following criteria for Cohen--Macaulayness of projective monomial curves in terms of Gr\"obner basis, which we shall crucially use throughout this article.
\begin{theorem}[cf.~{\cite[Theorem 2.2]{Herzog-Stamate-CM}}]\label{thm:CM-criterion}
    Let  $a_0 < a_1 < \cdots < a_p$  be a sequence of positive integers, $S$ be the numerical semigroup minimally generated by them. Let $<$ be the graded reverse lexicographic order on $\mathbb K[x_0, \ldots, x_p ]$, and $<'$ be the extended graded reverse lexicographic order on $\mathbb K[x_0, \ldots, x_p, y ]$, then the following conditions are equivalent:
    \begin{enumerate}[{\rm (i)}]
        \item $\mathbb K[\bar S]$ is Cohen--Macaulay.
        \item $x_p$ does not divide any element of the minimal monomial generating set of $\operatorname{in}_{<}(I_S)$.
        \item $x_p$ and $y$ do not divide any element of the minimal monomial generating set of $\operatorname{in}_{<'}(I_{\bar S})$.
    \end{enumerate}
\end{theorem}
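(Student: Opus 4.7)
The plan is to prove (ii) $\Leftrightarrow$ (iii) as a formal consequence of the homogenization-Gröbner compatibility cited from \cite{Ene-Herzog}, and to prove (i) $\Leftrightarrow$ (iii) by reducing Cohen--Macaulayness to a regular-sequence statement and invoking the classical reverse-lex lift of regular sequences.

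For (ii) $\Leftrightarrow$ (iii), I would use that homogenization appends only $y$-factors to non-maximal-degree terms, so the leading monomial of $g^h$ under $<'$ equals that of $g$ under $<$, and both are pure monomials in $x_0, \ldots, x_p$. Hence the minimal monomial generating sets of $\operatorname{in}_<(I_S)$ and $\operatorname{in}_{<'}(I_{\bar{S}})$ coincide, and $y$ automatically divides no minimal generator of the latter.

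For (i) $\Leftrightarrow$ (iii), I would first observe that $\mathbb{K}[\bar{S}]$ has Krull dimension $2$ and that $y$ is always a non-zerodivisor on $\mathbb{K}[\bar{S}]$, because $\bar{S} \subset \mathbb{N}^2$ is cancellative. Cohen--Macaulayness thus reduces to $(y, x_p)$ being a regular sequence on $R/I_{\bar{S}}$, where $R = \mathbb{K}[x_0, \ldots, x_p, y]$. Under $<'$ the variables $y$ and $x_p$ are the two smallest, so the standard reverse-lex lift gives that $(y, x_p)$ is regular on $R/I_{\bar{S}}$ if and only if it is regular on $R/\operatorname{in}_{<'}(I_{\bar{S}})$, which for a monomial quotient is exactly the condition in (iii). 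The core of this lift is the identity $\operatorname{in}_{<'}(I_{\bar{S}} : y) = \operatorname{in}_{<'}(I_{\bar{S}}) : y$, which follows from the defining property of reverse-lex that $y \mid \operatorname{in}_{<'}(f)$ forces $y \mid f$ after reducing to the homogeneous case; the analogous argument for $x_p$ on $R/(I_{\bar{S}} + (y))$ closes the proof. The main obstacle is precisely this Gröbner-deformation step, since the combinatorial conditions on minimal generators are immediate once the lift is in hand.
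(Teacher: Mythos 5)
This statement appears in the paper only as a quoted result (``cf.\ [Theorem 2.2]'' of Herzog--Stamate); the paper supplies no proof of it, so there is no internal argument to compare against. Your outline is essentially the standard proof of the cited result, and it is sound. For (ii) $\Leftrightarrow$ (iii): since $<$ is graded, $\operatorname{in}_{<'}(g^h)=\operatorname{in}_<(g)$, so $\operatorname{in}_{<'}(I_{\bar S})$ is the extension of $\operatorname{in}_<(I_S)$ to $\mathbb K[x_0,\ldots,x_p,y]$ and the two ideals share the same minimal monomial generators, none divisible by $y$; this is precisely the content of Lemma~2.1 of Herzog--Stamate, which the paper invokes elsewhere. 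For (i) $\Leftrightarrow$ (iii), reducing to the colon identities $\operatorname{in}_{<'}(I_{\bar S}:y)=\operatorname{in}_{<'}(I_{\bar S}):y$ and its analogue for $x_p$ modulo $y$ is the Bayer--Stillman property of grevlex for homogeneous ideals, and your justification --- that $y\mid\operatorname{in}_{<'}(f)$ forces $y\mid f$ when $f$ is homogeneous and $y$ is the last variable --- is the correct core of it; for the resulting monomial ideal, $(y,x_p)$ being regular is exactly the divisibility condition in (iii). The one step you assert without argument is that $\{y,x_p\}$ is a homogeneous system of parameters for $\mathbb K[\bar S]$, which you need in order to pass from ``Cohen--Macaulay'' to ``$(y,x_p)$ is a regular sequence'' (the converse direction needs only $\dim \mathbb K[\bar S]=2$). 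This is easily supplied: $x_i^{a_p}-x_p^{a_i}y^{\,a_p-a_i}\in I_{\bar S}$ for every $i<p$, so $\mathbb K[x_0,\ldots,x_p,y]/(I_{\bar S}+(y,x_p))$ is Artinian. With that observation added, your argument is complete.
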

\medskip

We now set up the notation for the rest of the article. 

\begin{notation} \hfill{}
    \begin{enumerate}[{\rm (a)}]
    \item For $1\leq m<n\leq e-2$, $S(e,m,n)\coloneqq \langle \{ e, e+1, \ldots, 2e-1\} \setminus \{e+m, e+n\} \rangle$.
    \item $R(e,m,n)\coloneqq \mathbb{K}[\{x_0, \ldots, x_{e-1}\}\setminus \{x_m, x_n\}]$.
    \item $R'(e,m,n)\coloneqq\mathbb K\left[\{x_0,\ldots, e-2\} \setminus\{m,n\} \right]$
    \item $\mathbb K[S(e,m,n)]\coloneqq \dfrac{R(e,m,n)}{I_{S(e,m,n)}}$.
    \item $\overline{S(e,m,n)}\coloneqq \langle \{(e+i, (2e-1)-(e+i) \mid i \in \{0, \ldots, e-1\} \setminus\{m,n\}\}, (0,2e-1)\rangle$
    \item $\mathbb K\left[\overline{S(e,m,n)}\right]\coloneqq \dfrac{R(e,m,n)[y]}{I_{\overline{S(e,m,n)}}}$.    
    \end{enumerate}
\end{notation}

\section{Numerical semigroups of the form \texorpdfstring{$S(e,1,n)$}{S(e,1,n)}}\label{sec:m=1-case}

This section focuses on the numerical semigroups of the form $S(e,m,n)$ with $m=1$. To study the case of projective monomial curves associated to $\overline{S(e,m,n)}$, we need a complete description of the defining ideal of the corresponding numerical semigroup $S(e,m,n)$. In a recent work~\cite{Hema-Sallytype}, the authors determined minimal generating sets 
for these ideals in the case $m \geq 2$. However, the case $m = 1$ was not considered. 
Therefore, we begin by deriving the minimal generating set for the defining ideal $S(e, m, n)$ when $m = 1$.

For $n \in [2,e-2]$, we define the following matrices.

$$A_n = \begin{bmatrix}
 x_2 & x_3 & \cdots & x_{n-2} & x_{n+1} & x_{n+2} & \cdots & x_{e-2} & x_{e-1} \\

 x_3 & x_4 & \cdots & x_{n-1} & x_{n+2} & x_{n+3} & \cdots & x_{e-1} & x_0^2
\end{bmatrix}, \quad B_2 = \begin{bmatrix}
    x_0 & x_3 & \cdots & x_{e-4} & x_{e-3} \\

 x_3 & x_6 & \cdots & x_{e-1} & x_0^2
\end{bmatrix} $$

$$B_n = \begin{bmatrix}
 x_0 & x_2 & \cdots & x_{n-3} & x_{n-1} & x_{n+1} & \cdots & x_{e-3} & x_{e-2} \\

 x_2 & x_4 & \cdots & x_{n-1} & x_{n+1} & x_{n+3} & \cdots & x_{e-1} & x_0^2
\end{bmatrix};\ \text{\ for\ } n \geq 3 $$

Let $J_n$ denote the set of all $2 \times 2$ minors of $A_n$, and let $K_n$ denote the set of all $2\times 2$ minors of $B_n$ with the first column. 

\begin{theorem}\label{thm:gens-for-m=1}
    { Let $e \geq 10$ and $n \in \{2, \ldots, e-2\}$. Then a minimal generating set $G_{S(e,1,n)}$ of $I_{S(e,1,n)}$ is given by $G_{S(e,1,n)} = J_n \cup K_n \cup L_{e,n}$, where

    \begin{enumerate}[{\rm (1)}]
        \item $L_{e,2} = \emptyset$.
        
        \item $L_{e,3} = \{x_4x_{4+i} - x_2x_{6+i} \mid i \in [0,e-7]\} \ \cup \ \{x_0^2x_2 - x_4x_{e-2}\}$.
        
        %\item  $L_{e,4} = \{x_5x_{5+i}-x_0x_{10+i} \mid 0 \leq i \leq e-11\}\  \cup  \ \left\{\begin{array}{c} x_5x_{e-5}-x_2x_{e-2},x_5x_{e-4}-x_2x_{e-1}, x_5x_{e-3}-x_3x_{e-1}, x_3^2-x_0x_{6},\\x_0^2x_3-x_5x_{e-2}, x_0x_2^2-x_5x_{e-1}, x_2^3-x_7x_{e-1} \end{array}\right\}$

  \item  $L_{e,4} = \{x_5x_{5+i}-x_3x_{7+i} \mid 0 \leq i \leq e-8\}\  \cup  \{x_3^2-x_0x_{6}\}  \cup \left\{\begin{array}{c} x_0^2x_3-x_5x_{e-2}, x_0x_2^2-x_5x_{e-1}, x_2^3-x_7x_{e-1} \end{array}\right\}$

\item For $5 \leq n \leq e-4$,
  $L_{e,n} = \begin{array}{l} \{x_{n+1}x_{n+1+i} - x_{n-1}x_{n+3+i} \mid i \in [0,e-n-4]\} \ \cup \\ \{x_{n-1}x_{n-1-i} - x_{n+1}x_{n-3-i} \mid [0,n-5]\} \ \cup \ \{x_{n-1}x_3 - x_{n+2}x_0\} \cup \\ \{x_0^2x_{n-1}- x_{n+1}x_{e-2}, x_0x_2x_{n-2}-x_{n+1}x_{e-1}\} \end{array}$

\item $L_{e,e-3} = \{x_{e-4}x_{e-4-i} - x_{e-2}x_{e-6-i} \mid i \in [0,e-8]\} \cup \{x_{e-4}x_3 - x_0x_{e-1}\} \cup \{x_0^2x_{e-4}- x_{e-2}^2, x_0x_2x_{e-5}-x_{e-2}x_{e-1}\}$
        
        %\item $L_{e,e-3} =  \{x_{3+i}x_{e-4}-x_{0}x_{e-1+i} \mid 0 \leq i \leq e-7\}\cup \{ x_{4} x_{e-4} - x_{2} x_{e-2}\}$ 
        %\vspace*{-5pt}$ \quad\quad\quad\quad\quad \cup\  \{x_{5+i}x_{e-4}-x_{2+i}x_{e-1}\mid 0 \leq i \leq e-9\} \cup \{x_0^2x_{e-4}- x_{e-2}^2, x_0x_2x_{e-5}-x_{e-2}x_{e-1}\}.$

        \item $L_{e,e-2} = \{x_{e-3}x_{e-3-i}-x_{e-1}x_{e-5-i} \mid  i \in [0, e-7]\} \cup \{x_0^3-x_3x_{e-3}, x_0x_2x_{e-4}-x_{e-1}^2\}.$
    \end{enumerate}}
    %{\rm (Note: We set the binomial $x_ix_j-x_kx_l$ to be zero whenever none of the variables $x_i,x_j,x_k,x_l$ is present in $R(e,m,n)$.)}
\end{theorem}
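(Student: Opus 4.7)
The plan is to show that the ideal $J$ generated by $G_{S(e,1,n)}$ equals $I_{S(e,1,n)}$ via Gastinger's theorem (\Cref{thm:dim}), and then verify minimality. The containment $J \subseteq I_{S(e,1,n)}$ is a routine weight-check: each $2\times 2$ minor of $A_n$ or $B_n$ is a difference of two monomials of equal weight, since a generic minor $x_i x_{j+1}-x_{i+1}x_j$ satisfies $\wt(x_i x_{j+1}) = (e+i)+(e+j+1) = (e+i+1)+(e+j) = \wt(x_{i+1}x_j)$, with the usual adjustment when the partner is $x_0^2$ (weight $2e$). The binomials in $L_{e,n}$ are likewise balanced, as one verifies directly for each family.

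For the reverse containment, I would apply \Cref{thm:dim} with the variable $x_0$ of weight $e$, reducing the problem to the equality $\dim_{\mathbb K}(R(e,1,n)/(J+(x_0))) = e$. Setting $x_0 = 0$, the minors of $A_n$ and of $B_n$ that involve the $x_0^2$ entry collapse to monomials, forcing many quadratic products to vanish in the quotient; the remaining minors encode a transitive system of identifications $x_i x_j = x_{i'} x_{j'}$ whenever $i+j = i'+j'$ and the indices stay inside the allowed range. With these identifications, together with the vanishings coming from the monomialized relations, one produces an explicit $\mathbb K$-basis of $R(e,1,n)/(J+(x_0))$ consisting of $1$, the surviving variables, and a small number of higher-degree monomials whose form depends on $n$, and verifies that this basis has exactly $e$ elements. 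Since $J \subseteq I_{S(e,1,n)}$ automatically gives $\dim_{\mathbb K}(R(e,1,n)/(J+(x_0))) \geq e$ (via \Cref{thm:dim} applied to $I_{S(e,1,n)}$ itself), equality follows, and \Cref{thm:dim} then yields $J = I_{S(e,1,n)}$.

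Minimality will follow from a leading-term comparison: under a graded term order, each binomial in $G_{S(e,1,n)}$ has a leading monomial not divisible by the leading monomial of any other, so no generator can be expressed in terms of the rest. The main obstacle will be the dimension count, which branches into the six sub-cases of the statement. For the boundary values $n \in \{2,3,4\}$ and $n \in \{e-3, e-2\}$ the matrix $B_n$ degenerates and the $x_0^2$-involving minors of $A_n$ coincide with or force extra binomials that appear in $L_{e,n}$; these collisions must be handled individually, and correspond exactly to the extra elements listed in $L_{e,n}$ for each boundary case. The methods of~\cite{Hema-Sallytype}, developed for the parallel case $m \geq 2$, will provide a template for the generic sub-case $5 \le n \le e-4$, but the edge cases will still require explicit bookkeeping of the surviving monomials in $R(e,1,n)/(J+(x_0))$.
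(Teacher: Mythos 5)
Your proposal follows essentially the same route as the paper: containment of $J$ in $I_{S(e,1,n)}$ by a weight check, reduction via Gastinger's theorem (\Cref{thm:dim}) to the count $\dim_{\mathbb K}(R(e,1,n)/(J+(x_0)))=e$, the upper bound obtained by showing all but two quadratic monomials and every monomial of degree at least three die in the quotient, the lower bound coming for free from $J\subseteq I_{S(e,1,n)}$, and minimality by comparing leading terms. The plan is sound; what remains is exactly the case-by-case bookkeeping for the boundary values of $n$ that the paper carries out via its observations (a)--(d).
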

\begin{proof}
   Given integers $e$ and $n$, let $I_{e,n}$ be the ideal generated by $J_n\cup K_n \cup L_{e,n}$. Observe that $I_{e,n}$ is minimally generated by $J_n\cup K_n \cup L_{e,n}$ (as the initial terms of the polynomials in this generating set do not divide one another), and we also have $I_{e,n}\subseteq I_{S(e,1,n)}$. We show that $\dim_{\mathbb{K}} \left(\dfrac{R(e,1,n)}{I_{e,n}+(x_0)}\right) = e$, and use \Cref{thm:dim} to conclude the proof. Since $I_{S(e,1,n)}$ is generated in degree $\geq 2$, the set  $\{1, x_2,x_3, \ldots, x_{n-1}, x_{n+1}, x_{n+2}, \ldots, x_{e-1}\}$ consisting of $e-2$ elements forms a part of the monomial $\mathbb{K}$-basis of $R/(I_{S(e,1,n)}+(x_0))$, and hence of $R(e,1,n)/(I_{e,n}+(x_0))$. In each of the cases (1)-(6), we prove that all but two monomials of degree 2 are nonzero in $R(e,1,n)/(I_{e,n}+(x_0))$. 
    %This would give us the required equality of the dimension when $n\neq 4$. When $n=4$, we show that $x_2^2, x_2x_{e-1}$ do not belong to $I_{e,n}+(x_0)$, completing the proof.
Consider the reverse graded lexicographic order $<$ on $R(e,1,n)$ %(here we take $\deg(x_i)=1$ for all $i$)
. We show that $\operatorname{in}_<(I_{e,n}+(x_0))$ contains all but two monomials of degree $2$ and all monomials of degree $\geq 3$. To do this, we first observe the following: 
\begin{enumerate}[(a)]
    \item Every monomial divisible by $x_0$ is in $\operatorname{in}_<(I_{e,n}+(x_0))$.
    \item Let $i>0$. Then $x_ix_{e-1}\in \operatorname{in}_<(I_{e,n}+(x_0))$ for all $i \neq 2, n+1$. This is because from the matrix $A_n$, we see that  for such $i$, we have $x_ix_{e-1}-x_0^2x_{i-1} \in I_{e,n}$.\\
    Also, for $n \neq 2,3$, we have $x_{n+1}x_{e-1} \in \operatorname{in}_<(I_{e,n})+(x_0))$, since in this case, $x_0x_2x_{n-2}-x_{n+1}x_{e-1} \in I_{e,n}$.

    \item Assume $i \neq 0, e-1$. Then, for $n\neq 4$, the monomial $x_i^2 \in \operatorname{in}_<(I_{e,n}+(x_0))$ for all $i$. This is evident for $i\neq 2,n-1, n+1$ from $A_n$, for $i=2$ from $B_n$, for $i=n-1, n+1$ from the set $L_{e,n}$.\\
    Similarly, for $n=4$, $x_i^2\in \operatorname{in}_<(I_{e,n}+(x_0))$ for all $i \neq 2$. Furthermore, when $n=4$, we have $x_2^3\in \operatorname{in}_<(I_{e,n}+(x_0))$ because $x_2^3- x_7x_{e-1} \in I_{e,n}$.
    \item $x_2x_j \in \operatorname{in}_<(I_{e,n} + (x_0))$ for $2<j\leq e-2$ with $j \neq n-2$, as evident from $B_n$.\\
    For $3\leq i <j\leq e-2$, we have $x_ix_j \in \operatorname{in}_< (I_{e,n} + (x_0))$. This is evident from $A_n$ for $i \neq n+1$ and for $j\neq n-1$, and from $L_{e,n}$ for $i=n+1$ and for $j=n-1$.
\end{enumerate} 

We are now ready to show that all monomials of degree $3$ are in $\operatorname{in}_{<}(I_{e,n}+(x_0))$. There are three types of monomials of degree 3, namely (i) $x_i^3$, (ii) $x_i^2x_j$, with $i\neq j$, and (iii) $x_ix_jx_k$ with $i<j<k$.

{Case (i):} From (a), (b), (c) above, we see that $x_i^3 \in \operatorname{in}_<(I_{e,n}+(x_0))$ for all $i$. 

{Case (ii):} From (a)-(d), we see that $x_i^2x_j \in \operatorname{in}_<(I_{e,n}+(x_0))$ except possibly for the case $n=4, i=2, j=e-1$. But $x_2^2x_{e-1} \in \operatorname{in}_<(I_{e,n}+(x_0))$, since $x_2^2x_{e-1} = x_2(x_2x_{e-1}-x_{3}x_{e-2}) - x_3(x_0^3-x_2x_{e-2}) + x_0^3x_3$, with $x_2x_{e-1}-x_{3}x_{e-2}, x_0^3-x_2x_{e-2} \in I_{e,4}$. 

{Case (iii):} From (a), we have $x_0x_jx_k \in \operatorname{in}_<(I_{e,n}+(x_0))$. Assume $i>0$. From (d), we see that $x_ix_jx_k \in \operatorname{in}_<(I_{e,n}+(x_0))$ for $k\leq e-2$. Finally, from (b) and (d), we see that $x_ix_jx_k \in \operatorname{in}_<(I_{e,n}+(x_0))$ for $k=e-1$. 

This proves that all degree 3 monomials are in $\operatorname{in}_<(I_{e,n}+(x_0))$.  We now consider degree 2 monomials. In each case, using observations (a)-(d), in every case it follows that except possibly two monomials, all other monomials belong to $\operatorname{in}_<(I_{e,n}+(x_0))$. More precisely, we have the following.

% Furthermore, we notice that if $i\neq 2, n+1$, then from matrices $A_n$ (or equivalently, from the set $J_n$), we see that for all $j > i$ such that $j \neq n-1$, we have $x_ix_j\in \operatorname{in}_<(I_{e,n}+(x_0))$. Also, from the matrix $B_n$ (or equivalently, from the set $K_n$), we see that $x_2x_j \in \operatorname{in}_<(I_{e,n}+(x_0))$ for all $j \geq 3$ with $j \neq n-1$. Hence, all degree 2 monomials except the following are in $\operatorname{in}_<(I_{e,n}+(x_0))$:
% \begin{enumerate}
%     \item[(a)] $M_1=\{x_2x_{n-2}, x_2x_{e-1}\}$

%  \item[(b)] $M_2=\{x_ix_{n-1} \mid 3 \leq i \leq n-2\}$
% \item[(c)] $M_3=\{x_{n+1}x_{j} \mid  j \geq n+2\}$
% \end{enumerate}
% We analyze the above monomials in each of the cases as follows:

\begin{enumerate}
    \item  For $n=2$, except $x_3x_{e-2}$ and $x_3x_{e-1}$, all degree $2$ monomials are in $\operatorname{in}_<(I_{e,2}+(x_0))$.

\item  For $n=3$, except $x_2x_{e-1}$ and $x_{4}x_{e-1}$, all degree $2$ monomials are in $\operatorname{in}_<(I_{e,3}+(x_0))$.

\item  For $n=4$, except $x_2x_{n-2}$ and $x_{2}x_{e-1}$, all degree $2$ monomials are in $\operatorname{in}_<(I_{e,4}+(x_0))$.   

\item  For $5 \leq n \leq e-4$, except $x_2x_{n-2}$ and $x_{2}x_{e-1}$, all degree $2$ monomials are in $\operatorname{in}_<(I_{e,n}+(x_0))$. 

\item  For $n=e-3$, except $x_2x_{n-2}$ and $x_{2}x_{e-1}$, all degree $2$ monomials are in $\operatorname{in}_<(I_{e,e-3}+(x_0))$.

\item  For $n=e-2$, except $x_2x_{n-2}$ and $x_{2}x_{e-1}$, all degree $2$ monomials are in $\operatorname{in}_<(I_{e,e-2}+(x_0))$.
\end{enumerate}

This shows that, in each case $\dim_{\mathbb{K}} \left(\dfrac{R}{I_{e,n}+(x_0)}\right)\leq e$. Since $I_{e,n}+(x_0)\subseteq I_{S(e,1,n)}$, we already have $\dim_{\mathbb{K}}\left(\dfrac{R}{I_{e,n}+(x_0)}\right)\geq  e$.  This completes the proof. 
\end{proof}

\begin{corollary}\label{cor:number-mingens}
    Let $e \geq 10$. Then, the cardinality of the minimal generating set of $I_{S(e,1,n)}$ is given by
  \[
\mu(I_{S(e,1,n)}) =
\begin{cases}
\binom{e-2}{2} - 1 & \text{if } n = 4, \\[6pt]
\binom{e-2}{2} - 2 & \text{otherwise.}
\end{cases}
\]

\end{corollary}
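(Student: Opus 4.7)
The plan is straightforward counting: by \Cref{thm:gens-for-m=1}, the set $G_{S(e,1,n)} = J_n \cup K_n \cup L_{e,n}$ is a minimal generating set of $I_{S(e,1,n)}$, so $\mu(I_{S(e,1,n)}) = |J_n| + |K_n| + |L_{e,n}|$. I would compute each of the three cardinalities directly from their explicit descriptions, sum them case-by-case, and compare against $\binom{e-2}{2}$.

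First I would count $|J_n|$: the matrix $A_n$ has columns indexed by $\{2, 3, \ldots, n-2\} \cup \{n+1, \ldots, e-2\} \cup \{e-1\}$ (with the first block understood as empty when $n \leq 3$), which gives $e-3$ columns when $n = 2$ and $e-4$ columns when $n \geq 3$. Hence $|J_2| = \binom{e-3}{2}$ and $|J_n| = \binom{e-4}{2}$ for $n \geq 3$. Next, since $K_n$ consists of the $2 \times 2$ minors of $B_n$ involving its first column, $|K_n|$ equals one less than the number of columns of $B_n$. A direct count yields $|K_2| = e-5$, $|K_3| = e-4$, and $|K_n| = e-5$ for $n \geq 4$.

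For $|L_{e,n}|$, the enumeration is immediate from the six case-by-case descriptions in \Cref{thm:gens-for-m=1}: one reads off $|L_{e,2}| = 0$, $|L_{e,3}| = e-5$, $|L_{e,4}| = e-3$ (with the extra generator $x_3^2 - x_0 x_6$ accounting for the larger value here), and $|L_{e,n}| = e-4$ for every $n \in \{5, 6, \ldots, e-2\}$. Summing in each of the six cases and simplifying gives
$$\mu(I_{S(e,1,n)}) = \frac{e^2 - 5e + 2}{2} = \binom{e-2}{2} - 2$$
whenever $n \neq 4$, and
$$\mu(I_{S(e,1,4)}) = \frac{e^2 - 5e + 4}{2} = \binom{e-2}{2} - 1.$$

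The only mildly delicate step is making sure the column counts for $A_n$ and $B_n$ are handled correctly at the boundary cases $n \in \{2, 3\}$, where some of the index blocks in the definitions degenerate; apart from that, the verification is pure arithmetic. The asymmetry at $n = 4$ is traced entirely to the lone extra binomial $x_3^2 - x_0 x_6$ inside $L_{e,4}$.
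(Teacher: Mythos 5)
Your counting is correct and is exactly how the paper obtains this corollary: it is stated without proof as an immediate tally of $|J_n|+|K_n|+|L_{e,n}|$ from \Cref{thm:gens-for-m=1} (whose proof already guarantees the three sets are disjoint, since the initial terms of the generators do not divide one another), and all the cardinalities and case sums you give check out. One small caveat: besides $n\in\{2,3\}$, the case $n=e-2$ is also degenerate for $B_n$ (as written its last column would involve $x_{e-2}=x_n$, which is not a variable and must be dropped), but your value $|K_{e-2}|=e-5$ is the correct one under that reading, so the final formulas are unaffected.
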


\begin{remark}
    In \cite[Conjecture 2.1]{Herzog-Stamate}, Herzog and Stamate conjectured that for any numerical semigroup $S$, $\mu(I_S) \leq \binom{\operatorname{wd}(S)+1}{2}$. By \Cref{cor:number-mingens}, it follows that the numerical semigroups of the form $S(e,1,n)$ satisfy this conjecture.   
\end{remark}

It is well-known that numerical semigroup rings  are 1-dimensional domains, and in particular Cohen--Macaulay. We proceed to characterize the Gorenstein property for $\mathbb K[S(e,m,n)]$, extending \cite[Theorem 2.3]{Hema-Sallytype}.

\begin{theorem}\label{thm:gor-for-m=1}
 The semigroup ring $\mathbb{K}[S(e,m,n)]$ is  Gorenstein if and only if $(m,n) = (2,3)$ or $(e,m,n) = (4,1,2)$. 
\end{theorem}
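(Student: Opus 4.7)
The overall strategy is Kunz's criterion: $\mathbb{K}[S]$ is Gorenstein if and only if $S$ is symmetric. The case $m \geq 2$ is already handled by \cite[Theorem 2.3]{Hema-Sallytype}, which shows that the unique symmetric member in that range is $(m, n) = (2, 3)$. Hence the new content concerns the case $m = 1$, and splits naturally: verify that $S(4, 1, 2) = \langle 4, 7\rangle$ is symmetric (it is, as every $2$-generated numerical semigroup is), and show that $S(e, 1, n)$ is not symmetric for every other admissible triple, i.e., whenever $e \geq 5$ and $2 \leq n \leq e - 2$.

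For the non-symmetry argument, the plan is to compute the Apéry set $\operatorname{Ap}(S(e, 1, n), e) = \{w_0, \ldots, w_{e-1}\}$, where $w_r$ is the smallest element of $S$ congruent to $r \pmod{e}$, and invoke the standard criterion that $S$ is symmetric if and only if $w \mapsto M - w$ is an involution on the Apéry set, with $M = \max \operatorname{Ap}$. For every $r \in \{2, \ldots, e-1\} \setminus \{n\}$ one has $w_r = e + r$ directly from the minimal generator. Letting $C = \{0\} \cup \{2, \ldots, n-1, n+1, \ldots, e-1\}$ and writing each element of $S$ as $ke + \sum c_i$ with $c_i \in C$, one checks that $w_1 = 3e + 1$ for all $e \geq 5$ (the required pair $a + b = e + 1$ with $a, b \in C \setminus \{0\}$ exists because the two forbidden values $n$ and $e + 1 - n$ exclude at most two of the $e - 2$ candidates for $a$), while $w_n = 2e + n$ for $n \geq 4$, $w_2 = 3e + 2$, and $w_3 = 3e + 3$.

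With the Apéry set made explicit, non-symmetry is verified by exhibiting a specific $w \in \operatorname{Ap}$ with $M - w \notin \operatorname{Ap}$. When $n = 2$, $M = 3e + 2$ and $M - (3e+1) = 1 \notin \operatorname{Ap}$; when $n = 3$, $M = 3e + 3$ and $M - (3e+1) = 2 \notin \operatorname{Ap}$; when $n \geq 4$, $M = 3e + 1$, and taking $w = e + (e + 1 - n) \in \operatorname{Ap}$ gives $M - w = e + n$, which fails to lie in $\operatorname{Ap}$ because $e + n$ is precisely the omitted generator. The only subtlety is the self-conjugate sub-case $n = (e + 1)/2$, where $e + 1 - n = n$ and the preceding witness degenerates; in that instance one replaces $w$ by $w_n = 2e + n$, yielding $M - w = n < e$, which is a gap of $S$ and hence absent from $\operatorname{Ap}$. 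The main technical hurdle is justifying the formulas for $w_1$ and $w_n$ --- in particular, ruling out shorter representations using a single generator --- after which the case analysis for non-symmetry is routine.
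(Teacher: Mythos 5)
Your proof is correct, and its skeleton is the same as the paper's: quote the $m\ge 2$ case from the earlier work, note that $S(4,1,2)=\langle 4,7\rangle$ is symmetric, and for $e\ge 5$ show that $S(e,1,n)$ is never symmetric, so that Kunz's criterion concludes. Where you genuinely diverge is in how non-symmetry is certified. The paper simply asserts the Frobenius number in each regime ($F=2e+2$ for $n=2$, $F=2e+3$ for $n=3$, $F=2e+1$ for $n\ge 4$) and exhibits a gap $s$ with $F(S)-s$ also a gap; you compute the full Ap\'ery set $\operatorname{Ap}(S,e)$ and apply the involution criterion $w\mapsto M-w$ with $M=\max\operatorname{Ap}(S,e)$. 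Your route is heavier but buys rigor the paper leaves implicit: the identities $w_1=3e+1$, $w_n=2e+n$ (for $n\ge4$), $w_2=3e+2$, $w_3=3e+3$ justify the asserted Frobenius numbers via $F(S)=M-e$, and your witnesses translate into exactly the paper's gap pairs ($\{1,2e+1\}$, $\{2,2e+1\}$, and $\{e+n,\,e-n+1\}$ respectively). Incidentally, the paper's displayed differences ``$(2e+2)-1=e+1$'' and ``$(2e+3)-2=e+1$'' contain an arithmetic slip (both equal $2e+1$); your computation supplies the correct element, and since $2e+1$ is indeed a gap the paper's conclusion survives. The one extra subtlety your method forces, and which you handle correctly, is the self-conjugate subcase $n=(e+1)/2$, where the generic witness $e+(e+1-n)$ degenerates and must be replaced by $w_n=2e+n$.
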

\begin{proof}
    When $m \neq 1$, the result is proved in \cite[Theorem 2.3]{Hema-Sallytype}. Therefore, we assume that $m = 1$. By the work of Kunz \cite{Kunz}, it is well-known that a numerical semigroup ring is Gorenstein if and only if its value semigroup is symmetric. 

    Let $e=4$. Then $n=2$, $S(4,1,2)=\langle 4,7\rangle$, and we have $I_{S(e,1,4)}=( x_0^{7}-x_{3}^4)$. Hence $\mathbb{K}[S(4,1,2)]$ is Gorenstein. 

    Now, assume that $e \geq 5$. It suffices to show that given any $2 \leq n \leq e-2$, the semigroup $S(e,1,n)$ is not symmetric. 
    
    When $n=2$, we have $F(S(e,1,n))= 2e+2$. In this case, we see that $1 \not\in S(e,1,n)$ as well as $(2e+2)-1 =e+1\not\in S(e,1,n)$. Hence $S(e,1,n)$ is not  symmetric.
      
    When $n=3$, we have $F(S(e,1,n))= 2e+3$. In this case, we see that $2 \not\in S(e,1,n)$ as well as $(2e+3)-2 =e+1\not\in S(e,1,n)$. Hence $S(e,1,n)$ is not  symmetric.

    When $n\geq 4$, we have $F(S(e,1,n))= 2e+1$. In this case, we see that $e+n \not\in S(e,1,n)$ as well as $(2e+1)-(e+n) =e-n+1\not\in S(e,1,n)$. Hence $S(e,1,n)$ is not  symmetric.
\end{proof}

\section{Gr\"obner basis and Cohen-Macaulayness}\label{sec:GB-CM}

The aim of this section is to characterize the Cohen--Macaulay and Gorenstein properties for $\mathbb K\left[\overline{S(e,m,n)}\right]$. We achieve this with the help of the Gr\"obner basis computation of the defining ideal $I_{\overline{S(e,m,n)}}$. Recall that, if $f_1, \ldots, f_r$  is a Gr\"obner basis of $I_{S(e,m,n)}$ with respect to the graded reverse lexicographic order, then $f_1^h, \ldots, f_r^h$ is a Gr\"obner basis of $I_{\overline{S(e,m,n)}}$ with respect to the graded reverse lexicographic order considering $y$ as the smallest variable. Therefore, it is sufficient to compute the Gr\"obner basis of $I_{S(e,m,n)}$. 

For $m=1$, a generating set of $I_{S(e,m,n)}$ is given in \Cref{thm:gens-for-m=1}. %For $m\geq 2$, we recall  the generating set of $I_{S(e,m,n)}$ obtained in \cite{Hema-Sallytype}.
{{For $m \geq 2$, a minimal generating set for $I_{S(e,m,n)}$ is obtained in \cite{Hema-Sallytype}. For our purposes, we modify their set so that it works well with the graded reverse lexicographic order $<$ that we shall consider in the subsequent part of the article. In the original generating set, some elements have the same initial terms with respect to $<$. The modified generating set, which we record below, removes these overlaps and makes the set more suitable for constructing a Gr\"obner basis.}}

% \[
% A_{m,n} =
% \left[
% \begin{array}{ccccccccccccc}
% x_0 & x_1 & \cdots & x_{m-2} & x_{m+1} & x_{m+2} & \cdots & x_{n-2} & x_{n+1} & x_{n+2} & \cdots & x_{e-2} & x_{e-1} \\
% x_1 & x_2 & \cdots & x_{m-1} & x_{m+2} & x_{m+3} & \cdots & x_{n-1} & x_{n+2} & x_{n+3} & \cdots & x_{e-1} & x_0^2
% \end{array}
% \right]
% \]

% Let $J_{m,n}$ denote the set of all $2 \times 2$ minors of $A_{m,n}$.

\begin{theorem}[{\cite[cf. Theorem 5.2]{Hema-Sallytype}}]\label{thm:gens-for-m-geq-2}
{Let $e \geq 10$ and $m \geq 2$. Define 

\[
A_{m,n} =
\left[
\begin{array}{ccccccccccccc}
x_0 & x_1 & \cdots & x_{m-2} & x_{m+1} & x_{m+2} & \cdots & x_{n-2} & x_{n+1} & x_{n+2} & \cdots & x_{e-2} & x_{e-1} \\
x_1 & x_2 & \cdots & x_{m-1} & x_{m+2} & x_{m+3} & \cdots & x_{n-1} & x_{n+2} & x_{n+3} & \cdots & x_{e-1} & x_0^2
\end{array}
\right].
\]

Then a minimal generating set $G_{S(e,m,n)}$ of $I_{S(e,m,n)}$ is given by $G_{S(e,m,n)} = J_{m,n} \cup K_{m,n}$, where $J_{m,n}$ denote the set of all $2 \times 2$ minors of $A_{m,n}$, and $K_{m,n}$ is given as follows.
}
{\rm 
\begin{enumerate}

\item $K_{2,3} = \ \begin{array}{l} \{ x_{n+1}x_{n+1+j}-x_{n-2}x_{n+4+j} \mid j \in [0, e-n-5]\}   \ \ \cup\\
 \{x_0^2x_{m-1}-x_{m+3}x_{e-4}, x_0x_1x_{m-1}-x_{m+3}x_{e-3}, x_1^3-x_4x_{e-1}\} \end{array} $

    \item $K_{2,4} = \ \begin{array}{l}\{ x_{m+1}x_{m+1+j}-x_{m-1}x_{m+3+j} \mid j \in [0,e-m-4]\setminus \{1\}\}  \ \cup\  \\  \{ x_{n+1}x_{n+1+j}-x_{n-1}x_{n+3+j} \mid j \in [0,e-n-4]\} \ \cup \\ \left\{\begin{array}{c}  x_0^2x_{m-1}-x_{m+1}x_{e-2}, x_0x_1x_{m-1}-x_{m+1}x_{e-1}, x_0x_1x_{n-1}-x_{n+2}x_{e-2}, \\ x_0^2x_{n-1}-x_{n+2}x_{e-3}, x_1^3-x_6x_{e-3}, x_1^2x_{n-1}-x_{n+2}x_{e-1}   \end{array}\right\} 
   \end{array} $ 

    \item $K_{2,5}= \ \begin{array}{l} \{x_{m+1}^2-x_{m-2}x_{m+4}\} \cup \{x_{m+1}x_{m+1+j}-x_{m-1}x_{m+3+j} \mid j \in [1,e-m-4] \setminus \{2\}\} \ \cup \\
    \{x_{n+1}x_{n+1+j}-x_{n-1}x_{n+3+j} \mid j \in [0,e-n-4] \} \ \cup \{x_{n-1}^2 - x_{n+2}x_{n-4}\} \ \cup \\ \left\lbrace \begin{array}{c} x_0x_1x_{n-1}-x_{n+2}x_{e-2}, x_0^2x_{n-1}-x_{n+2}x_{e-3}, x_1^2x_{n-1}-x_{n+2}x_{e-1},x_0^2x_{m-1}-x_{m+2}x_{e-3}, \\ x_0x_1x_{m-1}-x_{m+2}x_{e-2}, x_1^3 - x_4x_{e-1} \end{array} \right\rbrace
    \end{array}$
    
    \item For $6 \leq n \leq e-4 $,\\
    $K_{2,n}= \ \begin{array}{l} \{x_{m+1}x_{n-2}-x_{0}x_{n+1}\} \ \cup \\ \{x_{m+1}x_{m+1+j}-x_{m-1}x_{m+3+j} \mid j \in [0,e-m-4] \setminus \{n-m-1,n-m-3\}\} \ \cup \\
    \{x_{n+1}x_{n+1+j}-x_{n-1}x_{n+3+j} \mid j \in [0,e-n-4] \} \ \cup \\ \{x_{n-1}x_{n-1-j} - x_{n+1}x_{n-3-j} \mid j \in [0,n-m-4]\}  \ \cup \ \{x_{n-1}x_{m+2} - x_{n+2}x_{m-1}\} \ \cup \\ 
     \{x_0x_1x_{n-1}-x_{n+2}x_{e-2}, x_0^2x_{n-1}-x_{n+2}x_{e-3},x_0^2x_{m-1}-x_{m+2}x_{e-3}, x_0x_1x_{m-1}-x_{m+2}x_{e-2}, x_1^3 - x_4x_{e-1} \}
    \end{array}$

\item $K_{2,e-3} = \ \begin{array}{l} \{x_{m+1}x_{n-2}-x_{0}x_{n+1}\} \cup \{x_{m+1}x_{m+1+j}-x_{m-1}x_{m+3+j} \mid j \in [0,e-m-5] \setminus \{n-m-3\}\} \cup \\
     \{x_{n-1}x_{n-1-j} - x_{n+1}x_{n-3-j} \mid j \in [0,n-m-4]\}  \cup  \{x_{n-1}x_{m+2} - x_{n+2}x_{m-1}\} \ \cup \\  \{ x_0x_1x_{n-1}-x_{n+1}x_{e-1}, x_0^2x_{n-1}-x_{n+1}x_{e-2},x_0^2x_{m-1}-x_{m+3}x_{e-4}, x_0x_1x_{m-1}-x_{m+2}x_{e-2}, x_1^3 - x_4x_{e-1} \}\end{array}$

\item $K_{2,e-2} = \ \begin{array}{l} \{x_{m+1}x_{n-2}-x_{0}x_{n+1}\} \cup \{x_{m+1}x_{m+1+j}-x_{m-1}x_{m+3+j} \mid j \in [0,e-m-4] \setminus \{n-m-3\}\} \cup \\
     \{x_{n-1}x_{n-1-j} - x_{n+1}x_{n-3-j} \mid j \in [0,n-m-4]\}  \  \cup\\ \left\lbrace \begin{array}{c} x_0x_1x_{n-1}-x_{n+1}x_{e-1}, x_0^2x_{m-1}-x_{m+2}x_{e-3}, x_0x_1x_{m-1}-x_{m+3}x_{e-3}, x_1^3 - x_4x_{e-1}\end{array} \right\rbrace  \end{array}$

 \item $K_{3,4} = \ \begin{array}{l} \{ x_{n+1}x_{n+1+j}-x_{n-2}x_{n+4+j} \mid j \in [0, e-n-5]\}  \ \cup \\ \left\lbrace \begin{array}{c} x_0^2x_{m-1}-x_{m+3}x_{e-4}, x_0x_1x_{m-1}-x_{m+3}x_{e-3}, x_0x_2x_{n-2}-x_{n+2}x_{e-2}, x_2^2x_1-x_6x_{e-1}, x_2^3-x_7x_{e-1} \end{array} \right\rbrace \end{array}$

\item For $4 \leq m \leq e-6$,\\
$K_{m,m+1} = \ \begin{array}{l} \{ x_{n+1}x_{n+1+j}-x_{n-2}x_{n+4+j} \mid j \in [0, e-n-5]\}  \ \cup \\ 
\{x_{m-1}x_{m-1-j} - x_{m+2}x_{m-4-j} \mid j \in [0,m-4]\} \ \cup \\ \left\lbrace \begin{array}{c} x_0^2x_{m-1}-x_{m+2}x_{e-3}, x_0x_1x_{m-1} - x_{m+2}x_{e-2}, x_0x_2x_{n-2} - x_{n+2}x_{e-2}\end{array} \right\rbrace \end{array}$

\item $K_{e-5,e-4} = \ \begin{array}{l} \{x_{m-1}x_{m-1-j} - x_{m+2}x_{m-4-j} \mid j \in [0,m-4]\} \ \cup \\ \{x_0^2x_{m-1}-x_{m+2}x_{e-3}, x_0x_1x_{m-1} - x_{m+2}x_{e-2}, x_0x_2x_{n-2} - x_{n+2}x_{e-2}\}\end{array}$

\item $K_{e-4,e-3} = \ \begin{array}{l} \{x_{m-1}x_{m-1-j} - x_{m+2}x_{m-4-j} \mid j \in [0,m-4]\} \cup \{x_0x_1x_{m-1} - x_{m+2}x_{e-2}, x_0x_2x_{n-2} - x_{n+2}x_{e-2}\}\end{array}$

\item $K_{e-3,e-2} = \ \begin{array}{l} \{x_{m-1}x_{m-1-j} - x_{m+2}x_{m-4-j} \mid j \in [0,m-4]\} \cup \{x_0x_2x_{n-2} - x_{n+1}x_{e-1}\}\end{array}$

\item For $3 \leq m \leq e-6$,\\
$K_{m,m+2} = \ \begin{array}{l} \{x_{m+1}x_{m+1+j}-x_{m-1}x_{m+3+j} \mid j \in [0,e-m-4] \setminus \{1\}\} \ \cup \\ \{x_{n+1}x_{n+1+j} - x_{n-1}x_{n+3+j} \mid j \in [0,e-n-4]\} \cup \\
\{x_{n-1}x_{n-3-j} - x_{n+1}x_{n-5-j} \mid j \in [0,n-5]\} \ \cup \{x_{m-1}x_{m-1-j} - x_{m+1}x_{m-3-j} \mid j \in [0,m-3]\} \ \cup
\\ \{x_0^2x_{m-1} - x_{m+1}x_{e-2}, x_0x_1x_{m-1}-x_{m+1}x_{e-1}, x_0x_1x_{n-1} - x_{n+2}x_{e-2}, x_0^2x_{n-1} - x_{n+2}x_{e-3}\}\end{array}$    

\item $K_{e-5,e-3} = \ \begin{array}{l} \{x_{m+1}^2-x_{m-1}x_{m+3}\} \ \cup
\{x_{n-1}x_{n-3-j} - x_{n+1}x_{n-5-j} \mid j \in [0,n-5]\}  \cup \\ \{x_{m-1}x_{m-1-j} - x_{m+1}x_{m-3-j} \mid j \in [0,m-3]\} \ \cup \\  \{x_0^2x_{m-1} - x_{m+1}x_{e-2}, x_0x_1x_{m-1}-x_{m+1}x_{e-1}, x_0x_1x_{n-1} - x_{n+2}x_{e-2}, x_0^2x_{n-1} - x_{n+1}x_{e-2}\}\end{array}$

\item $K_{e-4,e-2} = \ \begin{array}{l} \{x_{m+1}^2-x_{m-1}x_{m+3}\} \ \cup
\{x_{n-1}x_{n-3-j} - x_{n+1}x_{n-5-j} \mid j \in [0,n-5]\}  \cup \\ \{x_{m-1}x_{m-1-j} - x_{m+1}x_{m-3-j} \mid j \in [0,m-3]\} \ \cup \  \{x_0x_1x_{m-1}-x_{m+1}x_{e-1}, x_0x_1x_{n-1} - x_{n+1}x_{e-1}\}\end{array}$   

\item For $m \geq 3, m+2 < n \leq e-4$, \\ $K_{m,n} = \ \begin{array}{l} \{x_{m+1}x_{n-m-5}-x_{m-2}x_{n-m-2}\} \ \cup \\ \{x_{m+1}x_{m+1+j}-x_{m-1}x_{m+3+j} \mid j \in [0,e-m-4] \setminus \{n-m-1,n-m-3\}\} \cup \\ \{x_{n+1}x_{n+1+j} - x_{n-1}x_{n+3+j} \mid j \in [0,e-n-4]\} \ \cup  \\
\{x_{n-1}x_{n-1-j} - x_{n+1}x_{n-3-j} \mid j \in [0,n-3] \setminus \{n-m-1,n-m-2,n-m-3\}\} \cup \\ \{x_{n-1}x_{m+2} - x_{n+2}x_{m-1}\} \cup  \{x_{m-1}x_{m-1-j} - x_{m+1}x_{m-3-j} \mid j \in [0,m-3]\} \ \cup
\\ \left\lbrace \begin{array}{c} x_0^2x_{m-1}-x_{m+2}x_{e-3}, x_0x_1x_{m-1}-x_{m+2}x_{e-2},x_0x_1x_{n-1}-x_{n+2}x_{e-2}, x_0^2x_{n-1}-x_{n+2}x_{e-3} \end{array} \right\rbrace \end{array}$

\item For $m \geq 3, m+2 < n = e-3$,\\
$K_{m,n} = \ \begin{array}{l} \{x_{m+1}x_{n-m-5}-x_{m-2}x_{n-m-2}\} \ \cup \\ \{x_{m+1}x_{m+1+j}-x_{m-1}x_{m+3+j} \mid j \in [0,e-m-4] \setminus \{n-m-1,n-m-3\}\} \cup  \\
\{x_{n-1}x_{n-1-j} - x_{n+1}x_{n-3-j} \mid j \in [0,n-3] \setminus \{n-m-1,n-m-2,n-m-3\}\} \cup \\ \{x_{n-1}x_{m+2} - x_{n+2}x_{m-1}\} \cup  \{x_{m-1}x_{m-1-j} - x_{m+1}x_{m-3-j} \mid j \in [0,m-3]\} \ \cup
\\ \left\lbrace \begin{array}{c} x_0^2x_{m-1} - x_{m+1}x_{e-2}, x_0x_1x_{m-1}-x_{m+1}x_{e-1}, x_0x_1x_{n-1} - x_{n+2}x_{e-2}, x_0^2x_{n-1} - x_{n+1}x_{e-2} \end{array} \right\rbrace \end{array} $

\item For $m \geq 3, m+2 < n = e-2$,\\
$K_{m,n} = \ \begin{array}{l} \{x_{m+1}x_{n-m-5}-x_{m-2}x_{n-m-2}\} \ \cup \\ \{x_{m+1}x_{m+1+j}-x_{m-1}x_{m+3+j} \mid j \in [0,e-m-4] \setminus \{n-m-3\}\} \cup  \\
\{x_{n-1}x_{n-1-j} - x_{n+1}x_{n-3-j} \mid j \in [0,n-3] \setminus \{n-m-1,n-m-2,n-m-3\}\} \cup \\  \{x_{m-1}x_{m-1-j} - x_{m+1}x_{m-3-j} \mid j \in [0,m-3]\} \ \cup
\\ \left\lbrace \begin{array}{c} x_0^2x_{m-1}-x_{m+2}x_{e-3}, x_0x_1x_{m-1}-x_{m+1}x_{e-1}, x_0x_1x_{n-1} - x_{n+1}x_{e-1} \end{array} \right\rbrace\end{array}$
\end{enumerate}
}\end{theorem}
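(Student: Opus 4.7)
The plan is to mirror the strategy used in the proof of \Cref{thm:gens-for-m=1}: apply Gastinger's criterion (\Cref{thm:dim}) with respect to the variable $x_0$. Let $I_{e,m,n}$ denote the ideal generated by $J_{m,n} \cup K_{m,n}$ as listed. Every binomial appearing in $J_{m,n}$ is a $2 \times 2$ minor of $A_{m,n}$ whose weight balance is immediate (each column of $A_{m,n}$ has matching weights), and for each binomial in the $17$ cases of $K_{m,n}$ one verifies directly that the two monomials carry the same weight in $S(e,m,n)$. This gives the containment $I_{e,m,n} \subseteq I_{S(e,m,n)}$ for free. By \Cref{thm:dim}, it then suffices to prove
$$\dim_{\mathbb{K}}\!\left(\dfrac{R(e,m,n)}{I_{e,m,n}+(x_0)}\right) \le e,$$
since the reverse inequality is automatic from $I_{e,m,n}+(x_0) \subseteq I_{S(e,m,n)}+(x_0)$.

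With the graded reverse lexicographic order $<$ on $R(e,m,n)$, we bound this dimension by showing that $\operatorname{in}_<(I_{e,m,n}+(x_0))$ contains: (a) every monomial divisible by $x_0$; (b) every squared variable $x_i^2$ for $i \neq 0$ (these come either from columns of $A_{m,n}$, from the ``square'' binomials among the $K_{m,n}$, or from triples like $x_2^3$ used in the $m=1$ argument -- the analogue here being the $x_1^3$ and $x_{m+1}^2$ type relations included in each $K_{m,n}$); (c) every product $x_ix_j$ with $i<j$ and $i,j \notin \{0\}$, except for two specific degree-$2$ monomials determined by the case; and (d) every monomial of degree $\ge 3$, which follows from (a)--(c) together with the ``bridging'' binomials such as $x_0^2 x_{m-1} - x_{?}x_{?}$ and $x_0 x_1 x_{n-1} - x_{?}x_{?}$ that appear in each $K_{m,n}$ and allow us to rewrite a degree-$3$ monomial as a combination of monomials already in the initial ideal, just as the step $x_2^2 x_{e-1} = x_2(x_2 x_{e-1} - x_3 x_{e-2}) - x_3(x_0^3 - x_2 x_{e-2}) + x_0^3 x_3$ was handled in \Cref{thm:gens-for-m=1}. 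Together with the $e-2$ monomials $\{1, x_1, \ldots\} \setminus \{x_m, x_n\}$ and the two exceptional degree-$2$ monomials flagged in step (c), this yields exactly $e$ monomials spanning the quotient, proving the dimension bound.

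For minimality, we observe that, by design, the leading monomials of the elements of $J_{m,n} \cup K_{m,n}$ under $<$ are pairwise incomparable with respect to divisibility: this is precisely the point of the modification to \cite[Theorem 5.2]{Hema-Sallytype}, where certain overlapping leading terms (e.g.\ two binomials starting with the same monomial) are replaced by their difference so that the initial ideal structure is transparent. Since our set has the same cardinality as the original minimal generating set from \cite[Theorem 5.2]{Hema-Sallytype}, minimality is inherited.

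The principal obstacle is the sheer combinatorial bookkeeping: the $17$ sub-cases of $K_{m,n}$ arise from the interaction of $m \in \{2,3,\ge 4\}$, of $n-m \in \{1,2,\ge 3\}$, and of the boundary positions $n \in \{e-3,e-2\}$, and in each sub-case the two ``exceptional'' degree-$2$ monomials in step (c) and the particular bridging trinomials in step (d) must be identified individually. The verification within a single sub-case is short and structurally identical to the argument in \Cref{thm:gens-for-m=1}, but the full write-up requires running through all cases; no new idea beyond the observations (a)--(d) above is needed.
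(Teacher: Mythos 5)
The paper does not actually prove this statement: \Cref{thm:gens-for-m-geq-2} is imported from \cite[Theorem 5.2]{Hema-Sallytype}, with the only new content being a modification of that generating set (replacing certain binomials whose initial terms coincide) that visibly preserves the generated ideal and the cardinality, hence minimality. Your proposal is therefore a genuinely different, self-contained route; it is the same Gastinger-criterion argument that the paper runs for $m=1$ in \Cref{thm:gens-for-m=1}, transplanted to $m\geq 2$. The strategy is sound: the weight check gives $I_{e,m,n}\subseteq I_{S(e,m,n)}$, the inequality $\dim_{\mathbb K}(R(e,m,n)/(I_{e,m,n}+(x_0)))\geq e$ is automatic, and the whole content is the upper bound, which amounts to showing that exactly two monomials of degree $\geq 2$ survive in the quotient. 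What your route buys is independence from \cite{Hema-Sallytype}; what it costs is that the entire burden of the theorem is pushed into the seventeen-case verification, which you explicitly defer. As written, the proposal is a plan rather than a proof: none of the case-by-case identifications of the two exceptional monomials, nor the degree-$3$ eliminations, is actually carried out, and that is where all the difficulty lives.

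Two concrete imprecisions should be repaired before the plan can be executed. First, step (b) as stated (``every squared variable $x_i^2$ for $i\neq 0$'' lies in the initial ideal) is false: one of the two surviving degree-$2$ monomials can itself be a square. For $m=2$ the survivor representing the Ap\'ery class of $e+m$ is $x_1x_{m-1}=x_1^2$ (compare \Cref{rem:Surviving-monomials-in-degrees-2-and-3}(2), where $(1,m-1)$ is listed as surviving for all $(m,n)$); only $x_1^3$ lies in the initial ideal, via $x_1^3-x_4x_{e-1}\in K_{2,n}$. So the exceptional monomials of step (c) must be allowed to overlap with the squares of step (b). Second, your minimality argument leans on matching the cardinality of the generating set in \cite[Theorem 5.2]{Hema-Sallytype}, which reintroduces the dependence on that reference you were trying to avoid; if you want the proof self-contained you should instead argue minimality directly (e.g., as in the proof of \Cref{thm:gens-for-m=1}, from the fact that the initial terms of the listed weight-homogeneous binomials do not divide one another). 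With these repairs and the case analysis actually written out, the argument would go through.
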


We will show that $\mathbb K[\overline{S(e,m,n)}]$ is Cohen--Macaulay if and only if $(m,n)\neq (e-4,e-3)$ (\cref{cor:CM-characterization}). We first prove the ``easy" direction.

\begin{lemma}\label{lem:notCM}
    The ring $\mathbb K[\overline{S(e,e-4,e-3)}]$ is not Cohen--Macaulay.
\end{lemma}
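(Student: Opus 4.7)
My plan is to invoke the Cohen--Macaulay criterion of \cref{thm:CM-criterion}: proving $\mathbb K[\overline{S(e,e-4,e-3)}]$ is not Cohen--Macaulay reduces to exhibiting a minimal monomial generator of $\operatorname{in}_<(I_{S(e,e-4,e-3)})$ that is divisible by $x_{e-1}$, where $<$ is the graded reverse lexicographic order on $R(e,e-4,e-3)$ with $x_0 > x_1 > \cdots > x_{e-1}$.

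The explicit element I would exhibit is the binomial $f = x_0^2 x_{e-5} x_{e-1} - x_{e-2}^3$. Its two terms have equal $S$-weight $6e-6$ since $2e + (2e-5) + (2e-1) = 3(2e-2)$, so $f \in I_{S(e,e-4,e-3)}$; and since $\deg(x_0^2 x_{e-5} x_{e-1}) = 4 > 3 = \deg(x_{e-2}^3)$, the grevlex initial term of $f$ is $x_0^2 x_{e-5} x_{e-1}$. Pleasantly, this binomial arises naturally as (a scalar multiple of) the $S$-polynomial of the two generators $x_0 x_1 x_{e-5} - x_{e-2}^2 \in K_{e-4,e-3}$ and $x_0 x_{e-1} - x_1 x_{e-2}$ (a $2\times 2$ minor of $A_{e-4,e-3}$), both of which appear in the minimal generating set of \cref{thm:gens-for-m-geq-2}.

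To finish, I would verify that $x_0^2 x_{e-5} x_{e-1}$ is a minimal generator of $\operatorname{in}_<(I_{S(e,e-4,e-3)})$, i.e., no proper divisor of it lies in the initial ideal. Because $I_{S(e,e-4,e-3)}$ is a toric ideal, a monomial fails to lie in its initial ideal precisely when it is the grevlex-minimum of its $S$-fiber (the set of monomials of the same $S$-weight). So for each of the ten proper divisors of $x_0^2 x_{e-5} x_{e-1}$ --- namely $x_0$, $x_0^2$, $x_{e-5}$, $x_{e-1}$, $x_0 x_{e-5}$, $x_0 x_{e-1}$, $x_{e-5} x_{e-1}$, $x_0^2 x_{e-5}$, $x_0^2 x_{e-1}$, and $x_0 x_{e-5} x_{e-1}$ --- I would enumerate the monomials of matching $S$-weight and confirm that the divisor is the grevlex-minimum among them. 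For instance, $x_0 x_{e-5} x_{e-1}$ has weight $5e-6$, which is too large to be realized by a degree-$2$ monomial (whose weight is bounded by $4e-2$); at degree $3$, every other representative strictly dominates it in grevlex because $x_0 x_{e-5} x_{e-1}$ uniquely attains the highest possible exponent of $x_{e-1}$ for its weight. Entirely analogous arguments dispatch the other nine divisors.

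The main obstacle is this final case analysis, but it is mechanical and short: the divisors involve only the three variables $x_0, x_{e-5}, x_{e-1}$, whose indices sit at the extremes of the range, so each relevant $S$-fiber contains only a handful of monomials, and the grevlex minimum in each is determined by the uniform rule ``push as much weight as possible into $x_{e-1}$, then into $x_{e-2}$, and so on''.
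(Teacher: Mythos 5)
Your proposal is correct and rests on exactly the same witness as the paper's proof: the binomial $x_0^2x_{e-5}x_{e-1}-x_{e-2}^3$, whose membership in $I_{S(e,e-4,e-3)}$ follows from the weight computation and whose grevlex initial term is $x_0^2x_{e-5}x_{e-1}$, combined with \cref{thm:CM-criterion}. The proofs diverge only in the finishing step. You propose to show that $x_0^2x_{e-5}x_{e-1}$ is itself a \emph{minimal} generator of the initial ideal by verifying that all ten proper divisors are standard monomials, using the (valid) fact that for a toric ideal a monomial is standard iff it is the grevlex-minimum of its weight fiber; note that only the three divisors of degree one less, namely $x_0x_{e-5}x_{e-1}$, $x_0^2x_{e-1}$ and $x_0^2x_{e-5}$, actually need checking. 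The paper instead takes a shortcut: it shows only that $x_0^2x_{e-5}\notin\operatorname{in}_<(I)$, since then every minimal generator of $\operatorname{in}_<(I)$ dividing $x_0^2x_{e-5}x_{e-1}$ (at least one exists) must be divisible by $x_{e-1}$, which already contradicts Cohen--Macaulayness. Two caveats on your write-up. First, your sample verification is slightly misstated: $x_0x_{e-5}x_{e-1}$ does \emph{not} uniquely attain the top exponent of $x_{e-1}$ in its degree-$3$ fiber (e.g.\ $x_1x_{e-6}x_{e-1}$ has the same exponent); one must continue the right-to-left greedy comparison at the next variables, where $x_0x_{e-5}x_{e-1}$ does win --- the conclusion is right, the stated reason is not quite. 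Second, your argument (like the paper's general one) implicitly requires $e$ to be large enough that $e-5$ is a positive, non-excluded index and the various fiber enumerations do not degenerate; the paper assumes $e\geq 10$ and disposes of $5\leq e\leq 9$ by a \texttt{Macaulay2} computation, and you should do likewise. The observation that the witness arises as an S-polynomial of two listed generators is a nice sanity check but is not needed, since membership in the toric ideal is already guaranteed by the weight identity.
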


\begin{proof}
A direct computation using \texttt{Macaulay2}~\cite{M2} shows that the result holds for $e \leq 9$. Thus, we assume that $e\geq 10$. We observe that $x_0^2x_{e-5}x_{e-1}-x_{e-2}^3 \in I_{S(e,e-4,e-3)}$, since $\wt(x_0^2x_{e-5}x_{e-1})=\wt(x_{e-2}^3)$.    In particular, the monomial $x_0^2x_{e-5}x_{e-1}$ is in $\operatorname{in}_< (I(e,e-4,e-3))$. Since this monomial is divisible by $x_{e-1}$, we are done with the proof by \Cref{thm:CM-criterion} if we show that $x_0^2x_{e-5}$ is not in $\operatorname{in}_< (I(e,e-4,e-3))$.

     Suppose the sake of contradiction that \begin{equation}\label{eq1}
         x_0^2x_{e-5}\in \operatorname{in}_< (I(e,e-4,e-3)).
     \end{equation}
     Since $\mathcal G_{S(e,e-4,e-3)}$ consists of binomials, applying the Buchberger's algorithm to this set we obtain a Gr\"obner basis of $I_{S(e,e-4,e-3)}$ consisting of only binomials. Due to (\ref{eq1}), the ideal $I(e,e-4,e-3)$ contains a binomial $x_0^2x_{e-5}-M$ where $M<x_0^2x_{e-5}$ is a monomial with $\wt(M)=\wt(x_0^2x_{e-5})=4e-5$.  As $M<x_0^2x_{e-5}$, we have $\deg (M) \leq 3$. We have the following claim.
     
     \begin{claim}\label{clm}
         We have that $\gcd(M,x_0^2x_{e-5})=1$. 
     \end{claim}
     \begin{proof}[Proof of \cref{clm}]
         Suppose that $\gcd(M,x_0^2x_{e-5})\neq 1$. Then by taking out a common variable of the two, we obtain two monomials $M''<M'$ with the same weight, where $M'\in \{x_0^2,x_0x_{e-5}\}$. Since $M''<M'$, we have $\deg(M'')\leq \deg(M')=2$. It is straightforward that $\wt(U)<2e$ for any monomial $U$ of degree $1$, and $\wt(V)>2e$ for any monomial $V\neq x_0^2$ of degree $2$. Therefore, if $M'=x_0^2$, we obtain a contradiction as $\wt(x_0^2)=2e$. Now assume that $M'=x_0x_{e-5}$, and in particular, $\wt(M')=4e-5$. Then first we must have $\deg(M'')=2$. Set $M''=x_ix_j$ for some $i<j$. If $i=0$, then $\wt(M'')=\wt(M')$ implies that $j=e-5$, or equivalently, $M''=M'$, a contradiction. Now assume that $i\geq 1$. Then $j\leq e-6$. In this case, $M''>M'$, a contradiction.
     \end{proof}

     Since $\wt(M)=4e-5$, the monomial $M$ cannot be linear. Next suppose that $\deg(M)=2$, i.e., $M=x_ix_j$ for $i,j\in [0,e-1]\setminus \{e-4,e-3\}$ where $i<j$. Then it is straightforward that $\wt(M)\neq 4e-5$, a contradiction. Now we can assume that $\deg M=3$, i.e., $M=x_ix_jx_k$ for $i,j,k\in [0,e-1]$ where $i<j<k$. By \cref{clm}, we have $i,j\geq 1$, and since $M<x_0^2x_{e-5}$, we have $k> e-5$. Therefore, we have
     \[
     \wt(M) = 3e+i+j+k\geq 4e-3>4e-5,
     \]
     a contradiction, as desired.
\end{proof}

To prove that $\mathbb K[\overline{S(e,m,n)}]$ is Cohen--Macaulay when $(m,n)\neq (e-4,e-3)$, we need a Gr\"obner basis of $I_{\overline{S(e,m,n)}}$ and then apply \cref{thm:CM-criterion}. Recall that a Gr\"obner basis of $I_{\overline{S(e,m,n)}}$ can be obtained from one of $I_{S(e,m,n)}$ by homongenization (\cite[Proposition~3.15]{Ene-Herzog}).

The generating set $G_{S(e,m,n)}$ given in \Cref{thm:gens-for-m=1} and \Cref{thm:gens-for-m-geq-2} is not always a Gr\"obner basis of $I_{S(e,m,n)}$. Thus, we need to add more elements to $G_{S(e,m,n)}$ to obtain a Gr\"obner basis of $I_{S(e,m,n)}$. For every $(e,m,n)$, we define a subset $G'_{S(e,m,n)}$ of $I_{S(e,m,n)}$ as in \Cref{t:table} below.

\begin{table}[h!]
\centering
\setlength{\tabcolsep}{6pt} % controls horizontal spacing (default 6pt)
\renewcommand{\arraystretch}{1.2} % increases vertical spacing between rows
\begin{tabular}{|c|c|c|}
\hline
\( m \) & \( n \) & \( G'_{S(e,m,n)} \) \\
\hline
\multirow{4}{*}{$1$} 
  & $2, 3$, \( e-2 \) & \( \emptyset \) \\ \cline{2-3}
  & $4$              &  $\{ x_0x_5x_{e-2}-x_2^2x_{e-1}, x_2^2x_{e-2}-x_0x_3x_{e-1} \}$               \\ \cline{2-3}
  & \( [5, e-4] \) &    $\{ x_0x_{n+1}x_{e-2} - x_2x_{n-2}x_{e-1} \}$            \\ \cline{2-3}
  & \( e-3 \)      &    $\{x_{e-2}^3-x_{e-4}x_{e-1}^2, x_2x_{e-2}^2-x_0x_{e-1}^2, x_0x_{e-2}^2-x_2x_{e-5}x_{e-1} \}$              \\ 
\hline
\multirow{5}{*}{$2$} 
  &  $3$    &   $\{ x_0x_4x_{e-3}-x_1^2x_{e-1} \}$              \\ \cline{2-3}
  & $4, 5$ &  $\{ x_0x_3x_{e-2}-x_1^2x_{e-1}, x_0x_{n+1}x_{e-2}-x_1x_{n-1}x_{e-1}\}$  \\ \cline{2-3}
  & $[6, e-4]$   & $\left\lbrace \begin{array}{c} x_0x_{n+1}x_{e-2}-x_1x_{n-1}x_{e-1}, x_0x_3x_{e-2}-x_1^2x_{e-1}, \\ x_1^2x_{n-1}-x_{n+2}x_{e-1} \end{array} \right\rbrace$            \\ \cline{2-3}
  & $e-3$ & $\left\lbrace \begin{array}{c}
       x_{e-2}^3-x_{e-4}x_{e-1}^2, x_3x_{e-2}^2-x_1x_{e-1}^2, x_0x_{e-2}^2-x_1x_{e-4}x_{e-1}, \\
       x_0x_3x_{e-2}-x_1^2x_{e-1}, x_1^2x_{e-4}- x_{e-1}^2 
       \end{array} \right\rbrace$ \\ \cline{2-3}
  & $e-2$ & $\{ x_0x_4x_{e-3}-x_1^2x_{e-1}, x_1^2x_{e-3}-x_0^2x_{e-1} \}$\\ \cline{2-3}
\hline
\multirow{4}{*}{$[3 ,e-6]$}
      &  $n < e-3$ and $n=m+1$   &   $\{x_0x_{m+2}x_{e-3}-x_1x_{m-1}x_{e-1}, x_0x_{n+1}x_{e-2} - x_2x_{n-2}x_{e-1} \}$ \\ \cline{2-3}
     & $n< e-3$ and $n\neq m+1$ & $\{x_0x_{m+1}x_{e-2}-x_1x_{m-1}x_{e-1}, x_0x_{n+1}x_{e-2} - x_1x_{n-1}x_{e-1}  \}$ \\ \cline{2-3}
     & $ e-3$ & $ \left\lbrace \begin{array}{c} x_{e-2}^3-x_{e-4}x_{e-1}^2, x_{m+1}x_{e-2}^2-x_{m-1}x_{e-1}^2, \\x_0x_{e-2}^2-x_1x_{e-4}x_{e-1}, x_0x_{m+1}x_{e-2} - x_1x_{m-1}x_{e-1} \end{array} \right\rbrace$ \\ \cline{2-3}
      & $e-2$ & $\{x_0x_{m+2}x_{e-3}-x_1x_{m-1}x_{e-1} \}$ \\ \cline{2-3}   
    \hline
    \multirow{3}{*}{$e-5$}
      &  $e-4$    &   $\left\lbrace \begin{array}{c} x_{e-3}^2x_{e-2}-x_{e-6}x_{e-1}^2, x_0x_{e-3}x_{e-2} - x_2x_{e-6}x_{e-1},\\ x_{e-3}^3 - x_{e-7}x_{e-1}^2, x_0x_{e-3}^2 - x_1x_{e-6}x_{e-1} \end{array} \right\rbrace$ \\ \cline{2-3}
     & $ e-3$ & $\left\lbrace \begin{array}{c} x_{e-2}^3-x_{e-4}x_{e-1}^2, x_{e-4}x_{e-2}^2 - x_{e-6}x_{e-1}^2,\\
     x_0x_{e-2}^2 - x_1x_{e-4}x_{e-1}, x_0x_{e-4}x_{e-2} - x_1x_{e-6}x_{e-1} \end{array} \right\rbrace$ \\ \cline{2-3}
      & $e-2$ & $\{x_{e-3}^3 - x_{e-7}x_{e-1}^2, x_1x_{e-3}^2 - x_0x_{e-4}x_{e-1}, x_0x_{e-3}^2 - x_1x_{e-6}x_{e-1} \}$ \\ \cline{2-3}
    \hline
     \multirow{2}{*}{$e-4$}
      &  $e-3$    &   $\{x_0x_{e-2}^2 - x_2x_{e-5}x_{e-1}, x_0^2x_{e-5}x_{e-1}-x_{e-2}^3, x_{e-2}^4-x_{e-5}x_{e-1}^3 \}$ \\ \cline{2-3}
     & $ e-2$ & $ \emptyset$ \\ \cline{2-3}
    \hline
    \multirow{1}{*}{$e-3$}
     & $ e-2$ & $\emptyset$ \\ \cline{2-3}
    \hline
\end{tabular}
\vspace{.2cm}
\caption{The sets $G'_{S(e,m,n)}$.}
\label{t:table}
\end{table}

Define $\mathcal{G}_{S(e,m,n)}= {G}_{S(e,m,n)} \cup {G}'_{S(e,m,n)}$.  We will show that $\mathcal{G}_{S(e,m,n)}$ is indeed a Gr\"obner basis for every $e \geq 10$ and $(m,n)\neq (e-4, e-3)$. Let $\operatorname{in}_<\left(\mathcal{G}_{S(e,m,n)}\right)$ denote the ideal of $R(e,m,n)$ generated by the initial terms of elements of $\mathcal{G}_{S(e,m,n)}$. We use the same notation to denote its image in $R'(e,m,n)$. 

The following two lemmas list the monomials in $R'(e,m,n)$ which do not belong to $\operatorname{in}_<(\mathcal{G}_{S(e,m,n)})$. 

\begin{lemma}\label{rem:Surviving-monomials-in-degrees-2-and-3}
    Let $e\geq 10$, $(m,n)\neq (e-4, e-3)$. Consider $\operatorname{in}_<(\mathcal{G}_{S(e,m,n)})$ as an ideal of $R'(e,m,n)$. Then

    \begin{enumerate}
        \item For $m=1$ and $0 \leq i\leq j \leq e-2$, $x_ix_j \not\in \operatorname{in}_<(\mathcal{G}_{S(e,m,n)})$ if and only if $i=0$ or 
        \[   (i,j) = \left\{
\begin{array}{ll}
(2,e-2) & ~\text{for all }~ n\\
(2,n-2) & ~\text{for all }~ n\\
(n+1,e-3) & ~\text{if}~ n=2\\
(n+1,e-2) & ~\text{for all}~  n\\
(3, e-3)& ~\text{if}~ n=e-2 .\end{array} 
\right. \]
        \item For $m \geq 2$ and $0 \leq i \leq j \leq e-2$, $x_ix_j \not\in \operatorname{in}_<(\mathcal{G}_{S(e,m,n)})$ if and only if $i=0$ or 
        \[   (i,j) = \left\{
\begin{array}{ll}
(m+1,e-2) & ~\text{for all }~ (m,n)\\
(n+1,e-3) & ~\text{if}~ n=m+1 \\
(n+1,e-2) & ~\text{for all}~ (m,n) \\
(1,m-1) & ~\text{for all}~ (m,n) \\
       (2,m-1) & ~\text{if}~ n=m+1  \\
       (1,n-1) & ~\text{for all}~(m,n)  \\
              (m+2,n-1) & ~\text{if}~ n=e-2  .\\

\end{array} 
\right. \]

        \item For $0 \leq i, j \leq e-2$ with $i \neq j$, we have $x_i^2 x_j \not\in \operatorname{in}_<(\mathcal{G}_{S(e,m,n)})$ if and only if $i=0$ and 
\[   j = \left\{
\begin{array}{ll}
e-3 & ~\text{if}~ (m,n) = (1,e-2) \\
       e-3 & ~\text{if}~(m,n) = (2,e-2) \\
       e-4 & ~\text{if}~(m,n) = (e-3,e-2) \\
       e-5 \text{ \ or\  }e-3 & ~\text{if}~(m,n) = (e-4,e-2) \\
       e-3 & ~\text{if}~(m,n) = (m,e-2); 3 \leq m \leq e-5 .\\
\end{array} 
\right. \]

        \item For $0 \leq i < j < k \leq e-2$, we have $x_ix_j x_k \not\in \operatorname{in}_<(\mathcal{G}_{S(e,m,n)})$ if and only if $i=0$ and
        \[   (j,k) = \left\{
\begin{array}{ll}
(2,e-2) & ~\text{if}~ (m,n) = (1,n); 3 \leq n \leq e-3\\
(3,e-3) & ~\text{if}~ (m,n) = (1,2),~ (1,e-2) \\
(3,e-2) & ~\text{if}~ (m,n) = (1,2) \\
(4,e-2) & ~\text{if}~ (m,n) = (1,3)\\
       (4,e-2) & ~\text{if}~ (m,n) = (2,3)  \\
       (1,e-4) & ~\text{if}~(m,n) = (e-3,e-2). \\
\end{array} 
\right. \]
    \end{enumerate}
\end{lemma}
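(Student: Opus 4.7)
The plan is to verify both directions of each item by a direct comparison of the given monomial against the leading terms of the binomials in $\mathcal{G}_{S(e,m,n)}$ that survive the passage to $R'(e,m,n)$, i.e., those whose leading term is not divisible by $x_{e-1}$.

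The first step is to catalog the leading terms of $\mathcal{G}_{S(e,m,n)} = G_{S(e,m,n)} \cup G'_{S(e,m,n)}$ under graded reverse lex. The $2\times 2$ minors from the matrices $A_n, B_n$ (for $m=1$) and $A_{m,n}$ (for $m\geq 2$) contribute two kinds of leading terms: (a) pure degree-$2$ leading terms of the form $x_j x_{i+1}$ obtained from a pair of non-final columns (where by the grevlex comparison the monomial whose largest-index variable has smaller exponent wins), and (b) degree-$3$ leading terms of the form $x_0^2 x_k$ coming from the final column, whose bottom entry is $x_0^2$. The supplementary sets $L_{e,n}$ (for $m=1$) and $K_{m,n}$ (for $m\geq 2$) contribute the degree-$2$ leading terms not covered by the matrix (specifically those needed around the ``holes'' at $x_m,x_n$) together with a small number of degree-$3$ leading terms of the form $x_0^2 x_k$ or $x_0 x_1 x_k$. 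Finally, $G'_{S(e,m,n)}$ as listed in \cref{t:table} is engineered to contribute precisely the additional cubic leading terms of the form $x_0 x_j x_k$ (and, in the corner regimes where $(m,n)$ is close to $(e-3,e-2)$, a few monomials of shape $x_i x_j^2$) needed to control the monomials that would otherwise survive into $R'(e,m,n)$.

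For the ``only if'' direction, I would check for each listed exceptional $(i,j)$ or $(j,k)$ that no leading term of $\mathcal{G}_{S(e,m,n)}$ (with $x_{e-1}$ suppressed) divides the exceptional monomial. Monomials divisible by $x_0$ are essentially immune: no pure degree-$2$ generator in the matrix produces $x_0$ in its leading term, since $x_0$ only enters as $x_0^2$ in the bottom-right of the matrix, so only the degree-$3$ ``$x_0^2 x_k$'' and ``$x_0 x_? x_?$'' generators can match. The non-$x_0$ exceptions are few and can be verified directly from the (finite) list of degree-$2$ leading terms coming from the matrix columns adjacent to the skipped positions $m,n$. For the ``if'' direction, I would exhibit, for each non-exceptional $x_ix_j, x_i^2 x_j,$ or $x_ix_jx_k$, an explicit element of $\mathcal{G}_{S(e,m,n)}$ whose leading term divides it; whenever the direct check is not immediate, the semigroup-weight argument used in the proof of \cref{lem:notCM} rules out any alternative surviving monomial by comparing $\wt(\cdot)$.

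The main obstacle is sheer bookkeeping: the cases $m=1$ with $n\in\{2,3,4,[5,e-4],e-3,e-2\}$ together with the rows of \cref{t:table} for $m\geq 2$ yield roughly twenty separate structural regimes, each requiring its own inventory of leading terms and its own list of exceptional monomials. To keep the verification manageable I would group the regimes by the shape of the matrix and the corresponding $L_{e,n}$ or $K_{m,n}$: the generic ``middle'' cases (such as $5\leq n\leq e-4$ with $m=1$, or $m\geq 3$ with $m+2<n<e-3$) admit a uniform argument, while the boundary cases ($n\in\{2,3,4\}$, $n\in\{e-3,e-2\}$, or $n=m+1$) need separate but parallel treatment, mirroring the case split already present in \cref{thm:gens-for-m=1}, \cref{thm:gens-for-m-geq-2}, and \cref{t:table}.
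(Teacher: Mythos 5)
Your plan is exactly the paper's approach: the paper's entire proof of this lemma is the single sentence ``This is straightforward from the definition of $\mathcal{G}_{S(e,m,n)}$,'' and your proposal simply spells out that direct case-by-case comparison of each monomial against the catalog of grevlex leading terms of $G_{S(e,m,n)}\cup G'_{S(e,m,n)}$. The only nitpick is that your labels for the two directions are swapped (showing no leading term divides the listed exceptions is the ``if'' direction, and exhibiting a divisor for every other monomial is the ``only if'' direction), but both checks are present, so the content is correct.
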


\begin{proof}
    This is straightforward from the definition of $\mathcal{G}_{S(e,m,n)}$.
\end{proof}

\begin{lemma}\label{lem:m^4-general}
    Let $e \geq 10$, $(m,n) \neq (e-4, e-3)$, and $\mathfrak{m}$ be the homogeneous maximal ideal of $\dfrac{R(e,m,n)}{\operatorname{in}_<(\mathcal{G}_{S(e,m,n)})+(x_{e-1})}$. Then $\mathfrak{m}^4 = 0$.
\end{lemma}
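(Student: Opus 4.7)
The goal is to show every monomial $M$ of degree $4$ in the variables of $R'(e,m,n)$ lies in $\operatorname{in}_<(\mathcal G_{S(e,m,n)})$; monomials of $R(e,m,n)$ involving $x_{e-1}$ are in $(x_{e-1})$ and vanish automatically in the quotient. Since $\operatorname{in}_<(\mathcal G_{S(e,m,n)})$ is a monomial ideal, it suffices, for each such $M$, to exhibit a divisor of $M$ that already lies there, and \cref{rem:Surviving-monomials-in-degrees-2-and-3} records precisely the monomials of degree $\leq 3$ that do not.

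The first step is the structural observation that every degree-$3$ monomial outside $\operatorname{in}_<(\mathcal G_{S(e,m,n)})$ is divisible by $x_0$. This is immediate from parts (3)(4) of \cref{rem:Surviving-monomials-in-degrees-2-and-3} for mixed cubes, while the ``pure cube'' case is handled separately: for $i\neq 0$, parts (1)(2) force $x_i^2\in \operatorname{in}_<(\mathcal G_{S(e,m,n)})$ and hence $x_i^3\in \operatorname{in}_<(\mathcal G_{S(e,m,n)})$; for $i=0$ one exhibits a generator with leading term $x_0^3$. Specifically, the $2\times 2$ minor formed by the first and last columns of the appropriate matrix yields $x_0^3-x_1 x_{e-1}\in J_{m,n}$ when $m\geq 2$, $x_0^3-x_2 x_{e-2}\in K_n$ when $m=1$ and $3\leq n\leq e-3$, and $x_0^3-x_3 x_{e-3}\in K_2$ when $m=1,n=2$; when $m=1, n=e-2$ the binomial $x_0^3-x_3 x_{e-3}$ is already listed in $L_{e,e-2}$.

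With the structural observation, the bulk of the proof becomes a short case analysis on the exponent $a$ of $x_0$ in $M$. If $a\leq 1$, one picks a degree-$3$ divisor $M/x_i$ not divisible by $x_0$ (take $x_i=x_0$ when $a=1$ and any variable when $a=0$), and the structural observation forces $M/x_i\in\operatorname{in}_<(\mathcal G_{S(e,m,n)})$. If $a\geq 3$, the divisor $x_0^3$ already lies in the initial ideal. The only nontrivial case is $a=2$: write $M=x_0^2 N$ with $\deg N=2$ and $x_0\nmid N$. The subcase $N=x_j^2$ is immediate because $x_0 x_j^2$, being of the form $x_i^2 x_k$ with $i=j\neq 0$, does not survive by \cref{rem:Surviving-monomials-in-degrees-2-and-3}(3). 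The crux is $N=x_j x_k$ with $j<k$: $M$ escapes $\operatorname{in}_<(\mathcal G_{S(e,m,n)})$ only if the three degree-$3$ divisors $x_0^2 x_j$, $x_0^2 x_k$, and $x_0 x_j x_k$ are simultaneously surviving.

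This simultaneous survival is the main obstacle. By \cref{rem:Surviving-monomials-in-degrees-2-and-3}(3) the survival of $x_0^2 x_\ell$ requires $n=e-2$, and for every pair $(m,e-2)$ with $m\neq e-4$ the list of admissible indices $\ell$ is a singleton, contradicting $j\neq k$. The sole delicate case is $(m,n)=(e-4,e-2)$, where both $x_0^2 x_{e-5}$ and $x_0^2 x_{e-3}$ survive, forcing attention to $M=x_0^2 x_{e-5} x_{e-3}$; here one invokes \cref{rem:Surviving-monomials-in-degrees-2-and-3}(4) to observe that $x_0 x_{e-5} x_{e-3}$ is not listed as a surviving triple for $(m,n)=(e-4,e-2)$, so this third divisor lies in $\operatorname{in}_<(\mathcal G_{S(e,m,n)})$ and the argument closes.
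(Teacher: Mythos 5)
Your overall strategy is sound and runs parallel to the paper's: both arguments reduce the vanishing of $\mathfrak m^4$ to the classification of surviving monomials of degree at most $3$ in \cref{rem:Surviving-monomials-in-degrees-2-and-3} and then dispose of the handful of residual degree-$4$ candidates (your $x_0$-adic case split versus the paper's split by partition type $x_i^4, x_i^3x_j, x_i^2x_j^2, x_i^2x_jx_k, x_ix_jx_kx_l$ is only a cosmetic difference). Your treatment of $x_0^3$, of the subcase $N=x_jx_k$, and of the delicate pair $(m,n)=(e-4,e-2)$ via $x_0x_{e-5}x_{e-3}$ all check out.

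However, your ``structural observation'' rests on a false sub-claim: it is \emph{not} true that parts (1) and (2) of \cref{rem:Surviving-monomials-in-degrees-2-and-3} force $x_i^2\in\operatorname{in}_<(\mathcal G_{S(e,m,n)})$ for every $i\neq 0$. Several of the exceptional pairs listed there are squares: $(1,m-1)$ gives a surviving $x_1^2$ whenever $m=2$; $(2,n-2)$ for $(m,n)=(1,4)$ and $(2,m-1)$ for $(m,n)=(3,4)$ give a surviving $x_2^2$; $(n+1,e-2)$ gives a surviving $x_{e-2}^2$ whenever $n=e-3$; and $(n+1,e-3)$ for $(m,n)=(e-5,e-4)$ together with $(m+2,n-1)$ for $(m,n)=(e-5,e-2)$ give a surviving $x_{e-3}^2$. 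In each of these cases your inference ``$x_i^2\in\operatorname{in}_<$, hence $x_i^3\in\operatorname{in}_<$'' breaks down, and the conclusion must instead be rescued by the explicit cubic binomials present in $\mathcal G_{S(e,m,n)}$, namely $x_1^3-x_4x_{e-1}$ (or $x_1^3-x_6x_{e-3}$) in $K_{2,n}$, $x_2^3-x_7x_{e-1}$ in $L_{e,4}$ and $K_{3,4}$, $x_{e-2}^3-x_{e-4}x_{e-1}^2$ in $G'_{S(e,m,e-3)}$, and $x_{e-3}^3-x_{e-7}x_{e-1}^2$ in $G'_{S(e,e-5,e-4)}$ and $G'_{S(e,e-5,e-2)}$. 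With those citations inserted, the structural observation---and hence the rest of your argument---goes through; without them, the ``pure cube'' step of your proof is unjustified precisely in the cases where it is not automatic.
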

\begin{proof}
Note that $\mathfrak{m}^4$ is generated by the elements of the following type: (a) $x_i^4$, (b) $x_i^3x_j$, (c) $x_i^2x_j^2$, (d) $x_i^2x_jx_k$, (e) $x_ix_jx_kx_l$; where $i,j,k,l \leq e-2$ and are all distinct.

By \Cref{rem:Surviving-monomials-in-degrees-2-and-3}, we see that elements of type (a) and (b) are in $\operatorname{in}_<(\mathcal{G}_{S(e,m,n)})$. 

For (c),  we need to only consider the elements of the form $x_0^2x_{j}^2$ for the cases mentioned in \Cref{rem:Surviving-monomials-in-degrees-2-and-3}(3). Among these cases, from \Cref{rem:Surviving-monomials-in-degrees-2-and-3}(1), (2), we see that $x_{j}^2 \in \operatorname{in}_<(\mathcal{G}_{S(e,m,n)})$ in every case except for $j=e-3$ and $(m,n)=(e-5,e-2)$. But in this case as well, from \Cref{t:table}, we see that $x_0x_{e-3}^2 \in \operatorname{in}_<(\mathcal{G}_{S(e,m,n)})$. 

For (d), from \Cref{rem:Surviving-monomials-in-degrees-2-and-3}, we only need to check for the case $(i,j,k)=(0,e-5,e-3)$ with $(m,n)=(e-4, e-2)$. But in this case as well, we see that $x_{e-5}x_{e-3}\in \operatorname{in}_<(\mathcal{G}_{S(e,m,n)})$, since $x_{e-5}x_{e-3}-x_{e-7}x_{e-1} \in \mathcal{G}_{S(e,m,n)}$.

For (e), from \Cref{rem:Surviving-monomials-in-degrees-2-and-3}(4) we first see that $i=0$, and in addition, the only elements that require checking are as follows: (i) $x_0x_3x_{e-3}e_{e-2}$ for $(m,n)=(1,2)$, (ii) $x_0x_2x_4x_{e-2}$ for $(m,n)=(1,3)$. In case (i), $x_{e-3}x_{e-2} \in \operatorname{in}_<(\mathcal{G}_{S(e,m,n)})$ from \Cref{rem:Surviving-monomials-in-degrees-2-and-3}(1). In case (ii), $x_2x_4 \in \operatorname{in}_<(\mathcal{G}_{S(e,m,n)})$ from \Cref{rem:Surviving-monomials-in-degrees-2-and-3}(1).
\end{proof}

We are now ready to prove that $\mathcal{G}_{S(e,,m,n)}$ is a Gr\"obner basis of $I_{S(e,m,n)}$ under appropriate hypotheses.

\begin{theorem}\label{thm:GB-proof-general}
    Let $e \geq 10$ and $(m,n)\neq (e-4, n-3)$. Then the set $\mathcal{G}_{S(e,,m,n)}$ is a Gr\"obner basis of $I_{S(e,m,n)}$. 
\end{theorem}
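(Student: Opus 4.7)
The strategy I would pursue is a Hilbert function comparison, using Gastinger's theorem (\cref{thm:dim}) as the entry point and exploiting the fact that the largest variable $x_{e-1}$ behaves as a non-zero-divisor on $R(e,m,n)/M$ in the cases under consideration. Since $G_{S(e,m,n)} \subseteq \mathcal{G}_{S(e,m,n)}$ already generates $I_{S(e,m,n)}$ by \cref{thm:gens-for-m=1} and \cref{thm:gens-for-m-geq-2}, we only need to check that each additional binomial in $G'_{S(e,m,n)}$ listed in \cref{t:table} belongs to $I_{S(e,m,n)}$, which amounts to verifying that its two terms have equal weight --- a routine computation. In particular, $(\mathcal{G}_{S(e,m,n)}) = I_{S(e,m,n)}$, and setting $M := (\operatorname{in}_<(g) \mid g \in \mathcal{G}_{S(e,m,n)})$ gives $M \subseteq \operatorname{in}_<(I_{S(e,m,n)})$.

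The next observation is that, under the assumption $(m,n) \neq (e-4, e-3)$, a direct inspection of the explicit list $\mathcal{G}_{S(e,m,n)}$ shows that no initial term of any of its elements is divisible by $x_{e-1}$: for every $2 \times 2$ minor in $G_{S(e,m,n)}$ the initial monomial comes out to be either of the form $x_{a+1}x_b$ (from an ordinary column pair) or $x_0^2 x_a$ (from the special column involving $x_0^2$), and the extra binomials in $G'_{S(e,m,n)}$ always lead on their $x_{e-1}$-free term. Consequently $x_{e-1}$ is a non-zero-divisor on $R(e,m,n)/M$.

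I would next invoke \cref{rem:Surviving-monomials-in-degrees-2-and-3} and \cref{lem:m^4-general}: together these list, in each subcase of $(m,n)$, the monomials of degree $0$ through $3$ that survive modulo $M + (x_{e-1})$, and preclude any survivors in degree $\geq 4$. A direct case-by-case tally --- which I expect to be the main bookkeeping obstacle --- yields exactly $2e-1$ surviving monomials in every case, so
\[
\dim_{\mathbb{K}} \bigl(R(e,m,n)/(M + (x_{e-1}))\bigr) \le 2e - 1.
\]
On the other hand, since $a_{e-1} = 2e-1$ is the largest minimal generator of $S(e,m,n)$, \cref{thm:dim} gives $\dim_{\mathbb{K}}(R(e,m,n)/(I_{S(e,m,n)} + (x_{e-1}))) = 2e-1$. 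Because $I_{S(e,m,n)} + (x_{e-1})$ is $\wt$-homogeneous, Macaulay's theorem in the $\wt$-grading applies, and from the chain
\[
M + (x_{e-1}) \subseteq \operatorname{in}_<(I_{S(e,m,n)}) + (x_{e-1}) \subseteq \operatorname{in}_<\bigl(I_{S(e,m,n)} + (x_{e-1})\bigr)
\]
the induced surjection $R/(M+(x_{e-1})) \twoheadrightarrow R/\operatorname{in}_<(I_{S(e,m,n)}+(x_{e-1}))$ has source and target both of $\mathbb{K}$-dimension $2e-1$, forcing every inclusion in the chain to be an equality and the $\wt$-graded Hilbert functions to coincide coefficientwise.

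To conclude, since $x_{e-1}$ is a non-zero-divisor on both $R(e,m,n)/I_{S(e,m,n)}$ (a domain) and $R(e,m,n)/M$ (by the inspection above), the short exact sequence induced by multiplication by $x_{e-1}$ expresses the $\wt$-graded Hilbert series of $R/I_{S(e,m,n)}$ (respectively $R/M$) as the Hilbert series of $R/(I_{S(e,m,n)}+(x_{e-1}))$ (respectively $R/(M+(x_{e-1}))$) divided by $1 - t^{2e-1}$. Since the mod-$x_{e-1}$ Hilbert series agree by the previous paragraph, the $\wt$-graded Hilbert series of $R/M$ and $R/I_{S(e,m,n)}$ also agree, and hence so do those of $R/M$ and $R/\operatorname{in}_<(I_{S(e,m,n)})$ by Macaulay's theorem applied to $I_{S(e,m,n)}$ itself. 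Combined with $M \subseteq \operatorname{in}_<(I_{S(e,m,n)})$, this forces $M = \operatorname{in}_<(I_{S(e,m,n)})$, which is exactly the statement that $\mathcal{G}_{S(e,m,n)}$ is a Gr\"obner basis.
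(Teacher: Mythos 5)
Your proof is correct, but it follows a genuinely different route from the paper's. The paper argues directly on the Buchberger closure $\widetilde{\mathcal{G}}$ of $\mathcal{G}_{S(e,m,n)}$: it normalizes $\widetilde{\mathcal{G}}$ to a set of binomials not divisible by any variable, shows by an inductive tail-reduction argument that $\widetilde{\mathcal{G}}$ can be modified so that $x_{e-1}$ divides no initial term, and then uses \cref{rem:Surviving-monomials-in-degrees-2-and-3} and \cref{lem:m^4-general} to constrain the trailing term of any would-be new element to degree at most $3$ and derive a weight contradiction in each case. You replace all of this with a Hilbert-function count: Gastinger's theorem gives $\dim_{\mathbb K} R/(I_{S(e,m,n)}+(x_{e-1}))=2e-1$, Macaulay's theorem transfers this to $\operatorname{in}_<(I_{S(e,m,n)}+(x_{e-1}))$, and the chain $M+(x_{e-1})\subseteq \operatorname{in}_<(I_{S(e,m,n)})+(x_{e-1})\subseteq\operatorname{in}_<(I_{S(e,m,n)}+(x_{e-1}))$ collapses once the standard monomials of $M+(x_{e-1})$ are tallied; the regular-sequence/Hilbert-series step then lifts the equality back to $M=\operatorname{in}_<(I_{S(e,m,n)})$. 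Both arguments consume the same combinatorial inputs (\cref{rem:Surviving-monomials-in-degrees-2-and-3}, \cref{lem:m^4-general}, and the observation that no initial term of $\mathcal{G}_{S(e,m,n)}$ is divisible by $x_{e-1}$), but your version trades the paper's delicate manipulation of $\widetilde{\mathcal{G}}$ for a single explicit tally. That tally is the one obligation you leave unverified; it does work out to exactly $2e-1$ in every branch of \cref{rem:Surviving-monomials-in-degrees-2-and-3} (taking care that some listed pairs $(i,j)$ involve the absent variables $x_m,x_n$ or $x_{e-1}$ and must be discarded), and in fact you only need the inequality $\leq 2e-1$, since the reverse inequality is automatic from the inclusion $M+(x_{e-1})\subseteq\operatorname{in}_<(I_{S(e,m,n)}+(x_{e-1}))$ and Gastinger. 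It is worth noting that your argument is interestingly dual to the paper's own proof of \cref{thm:Gor}, which runs the same dimension count in the opposite direction, deducing $\dim_{\mathbb K}(T_2)+\dim_{\mathbb K}(T_3)=e+1$ from the already-established Gr\"obner basis.
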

\begin{proof}
Let $e\geq 10$ and $(m,n) \neq (e-4, e-3)$. Suppose that $R(e,m,n)$ is equipped with graded reverse lexicographic order $<$. Fix an $(e,m,n)$, and let $\mathcal{G}$ denote the set $\mathcal{G}_{S(e,m,n)}=G_{S(e,m,n)}\cup G'_{S(e,m,n)}$. Suppose $J$ is the ideal generated by $\mathcal{G}$, and $I=I_{S(e,m,n)}$. We want to show that $\mathcal{G}$ is a Gr\"obner basis of $I$. Note that since $\mathcal{G}$ contains a generating set of $I$, $\mathcal{G}$ itself is a generating set of $I$.

Let $\widetilde{\mathcal{G}}$ denote the Gr\"obner basis of $I$ obtained by applying Buchberger's algotithm to $\mathcal{G}$. Observe that if $f$ and $g$ are binomials, then their S-pair is also a binomial. Thus, it follows that $\widetilde{\mathcal{G}}$ consists of binomials. Furthermore, if there is an $x_i$ which divides an element $f\in \widetilde{\mathcal{G}}$, then $f/x_i \in I$, and so we may replace $\widetilde{\mathcal{G}}$ with $\left(\widetilde{\mathcal{G}}\setminus \{f\}\right)\ \cup \{f/x_i\}$, which is still a Gr\"obner basis of $I$. Thus,  without loss of generallity, we may assume that no element in $\widetilde{\mathcal{G}}$ is divisible by a variable. Observe that no element of $\mathcal{G}$ is divisible by a variable. So, we have $\mathcal{G} \subseteq \widetilde{\mathcal{G}}$. Moreover, we see that none of the initial terms of elements of $\mathcal{G}$ is divisible by $x_{e-1}$.

We first claim that $\widetilde{\mathcal{G}}$ can be modified in such a way that $x_{e-1}$ does not divide the initial term of any of its elements. We prove the claim by induction on the $\ell$, the number of elements of $\widetilde{\mathcal{G}}$ whose initial term is divisible by $x_{e-1}$. The base case, i.e., the case $\ell=0$ is clear. Assume that the result holds for some integer $\ell-1\geq 0$. We now prove the result for $\ell$. So, assume that $\ell\geq 1$, and that there are $\ell$ elements in $\widetilde{\mathcal{G}}$ whose initial terms are divisible by $x_{e-1}$. Suppose that one such term is $f =M_1-N_1 \in \widetilde{\mathcal{G}}$, with $\operatorname{in}_<(f)=M_1$ and $x_{e-1}\mid M_1$. Then $x_{e-1}\nmid N_1$. Observe that since $x_{e-1}$ is the least variable with respect to $<$, we must have $\deg(M_1)>\deg(N_1)$. If $N_1$ is divisible by any of the initial terms of $\mathcal{G}$, say $N_1=M\cdot M_2=M\cdot \operatorname{in}_<(M_2-N'_2)$ for some $M_2-N_2' \in \mathcal{G}$, then we see that with $N_2=M\cdot N_2'$, the element $M_1-N_2 \in I$ with $\operatorname{in}_<(M_1-N_2)=M_1$. Note that with respect to $<$, we have $N_1>N_2$. If $N_2$ is divisible by the initial term of some element of $G$, then proceeding as above, we get an element $M_1-N_3 \in I$ with $\operatorname{in}_<(M_1-N_3)=M_1$ and $N_1>N_2>N_3$. Since every strictly descending chain of monomials with respect to $<$ starting with $N_1$ is finite, we see that after a finite number of iterations of the above process we would obtain an element $N_r$ such that $N_1>N_2>\cdots>N_r$, $M_1-N_r \in I$, $\operatorname{in}_<(M_1-N_r)=M_1$, and $N_r$ is not divisible by any of the initial terms of elements of $\mathcal{G}$. Thus, we have two cases: \begin{itemize}
    \item[(i)] $x_{e-1} \mid M_1-N_r$,
    \item[(ii)] $x_{e-1} \nmid M_1-N_r$. 
\end{itemize}
In case (i), we may replace $\widetilde{\mathcal{G}}$ by $\left(\widetilde{\mathcal{G}} \setminus \{M_1-N_1\}\right)  \cup \left\{(M_1-N_r)/\gcd(M_1, N_r)\right\}$. Observe that the latter set contains $G$, is again a Gr\"obner basis of $I$, and none of its elements is divisible by any variable. Now, if $x_{e-1} \nmid M_1/\gcd(M_1, N_r)$, then the new Gr\"obner basis has exactly $\ell-1$ elements whose initial term is divisible by $x_{e-1}$, and we are done by induction. On the other hand, if $x_{e-1} \mid M_1/\gcd(M_1, N_r)$, then $x_{e-1} \nmid N_r/\gcd(M_1, N_r)$, and we are in case (ii). \\
To complete the proof of the claim, we now show that case (ii) does not occur at all. For the sake of contradiction, suppose that $x_{e-1}\nmid M_1-N_r \in \widetilde{\mathcal{G}}$, $x_{e-1}\mid M_1$, and $\gcd(M_1, N_r)=1$. Then $N_r$ is not divisible by $x_{e-1}$. Note that by construction, $N_r$ is not divisible by any of the initial terms of $\mathcal{G}$. Moreover, since $x_{e-1}\mid M_1$ and $x_{e-1}\nmid N_r$, we have $\deg(M_1)>\deg(N_r)$. \\
Observe that from  \Cref{lem:m^4-general}, we must have $\deg(N_r)\leq 3$. Since $M_1-N_r\in I$, we also have $\deg(N_r)\geq 2$. Thus, $\deg(N_r)=2$ or $\deg(N_r)=3$. Assume that $\deg(N_r)=2$. In this case, $\deg(M_1)\geq 3$. Moreover, since $x_{e-1}\mid M_1$, we have $\wt(M_1)\geq e+e+(2e-1)=4e-1$. On the other hand, since $x_{e-1}\nmid N_r$, we have $\wt(N_r)\leq (2e-2)+(2e-2)=4e-4< \wt(M_1)$. This contradicts the fact that $M_1-N_r\in I$. Therefore, we may assume that $\deg(N_r)=3$. In this case, $\deg(M_1)\geq 4$. Thus, we have $\wt(M_1)\geq e+e+e+(2e-1)=5e-1$. Recall that \Cref{rem:Surviving-monomials-in-degrees-2-and-3} precisely gives us the set of all possibilities for $N_r$. If $N_r$ is of the form $x_i^2x_j$, then \Cref{rem:Surviving-monomials-in-degrees-2-and-3}(3) tells us that $i=0$. In particular, $\wt(N_r)\leq e+e+(2e-2)=4e-2<5e-1\leq \wt(M_1)$, which is again a contradiction. Suppose $N_r$ is of the form $x_ix_jx_k$. Then \Cref{rem:Surviving-monomials-in-degrees-2-and-3}(4) tells us that $i=0$, $j\leq 4$ and $k\leq e-2$, which again gives us the contradiction that $\wt(N_r)\leq e+(e+4)+(2e-2)=4e+2<5e-1\leq \wt(M_1)$. 
\\
Thus, by induction, we have proved that $\widetilde{\mathcal{G}}$ can be modified such that $x_{e-1}$ does not divide initial term of any of its elements. %{\color{blue}{In particular, by \Cref{thm:CM-criterion}, we get that if $(m,n)\neq (e-4, e-3)$, then $\mathbb{K}[\overline{S(e,m,n)}]$ is Cohen--Macaulay.}}

Now, in view of the claim proved above, assume that $\widetilde{\mathcal{G}}$ is a binomial Gr\"obner basis of $I$ containing $\mathcal{G}$  such that $x_{e-1}$ does not divide the initial term of any element of $\widetilde{\mathcal{G}}$. Furthermore, we may assume that $\widetilde{\mathcal{G}}$ is minimal in the sense that given any $f, g \in \widetilde{\mathcal{G}}$, neither  the monomials $\operatorname{in}_<(f) $ nor $ \operatorname{in}_<(g)$ divides the other. We prove that $\widetilde{\mathcal{G}}=\mathcal{G}$. For this, from \Cref{lem:m^4-general}, it is enough to show that if $M$ is a monomial of degree $2$ or $3$, which is not a multiple of any of the initial term of an element of $\mathcal{G}$, then $M$ is not an initial term of any binomial in $I$. We do this by the method of contradiction. So, assume the existence of an $M$ as above so that $f=M-N \in \widetilde{\mathcal{G}}$.

\textit{Case 1:} $\deg(M)=2$.\\
In this case, we must have $\deg(N)=2$. Hence, if $M=x_{i}x_j$, then $N$ has the form $x_{i-k}x_{j+k}$ for some $k>0$. Now, in every case listed in \Cref{rem:Surviving-monomials-in-degrees-2-and-3}(1),(2), it is easy to see that given any $x_ix_j$ and $k>0$, either $x_{i-k}$ or $x_{j+k}$ is not a variable in $R(e,m,n)$. For instance, consider the case $(i,j)=(3,e-3)$ and $(m,n)=(1,e-2)$ from \Cref{rem:Surviving-monomials-in-degrees-2-and-3}(1). Then, we see that neither of the monomials $x_2x_{e-2}, x_{1}x_{e-1}$ is in $R(e,,m,n)$ because of the absence of $x_1$ and $x_{e-2}$. 
Thus, we are done in Case 1.

\textit{Case 2:} $\deg(M)=3$.\\
By Case (1) above, we may assume that $\gcd(M,N)=1$. Here, $N$ can have degree 2 or 3. Let $M=x_ix_jx_k$ with $i\leq j\leq k$, $N=x_{i'}x_{j'}$ if $\deg(N)=2$, and $N=x_{i'}x_{j'}x_{k'}$ if $\deg(N)=3$, where $i'\leq j'\leq k'$. Observe that when $\deg(N)=3$, we must have $k<k'$. If $\deg(N)=2$, then $\wt(N)\leq 4e-2$. 

We first analyze all cases listed in \Cref{rem:Surviving-monomials-in-degrees-2-and-3}(4), i.e., when $M$ is of the form $x_ix_jx_k$ with $i<j<k$. \\
For the first five cases listed therein, we see that $\wt(M)\geq 4e$. Thus, $\deg(N)$ must be equal to $3$. In these five cases, it is easy to arrive at the contradiction $\wt(N)>\wt(M)$ using the fact that $\gcd(M, N)=1$. For instance, consider the case $M=x_0x_3x_{e-3}$ and $(m,n)=(1,e-2)$. Then, we must have $i'\geq 2, j'\geq 3, k'=e-1$, which forces $\wt(N)\geq (e+2)+(e+3)+(2e-1)> 4e=\wt(M)$.\\
For the remaining case $(i,j,k)=(0,1,e-4)$ and $(m,n)=(e-3,e-2)$, we have $\wt(M)=4e-3$. The only way $N=x_{i'}x_{j'}$ can have weight $4e-3$ is if $N=x_{e-2}x_{e-1}$, which is impossible. Thus, $\deg(N)=3$. But now, $i'\geq 2, j'\geq 3, k'=e-1$ gives $\wt(N)>\wt(M)$, a contradiction.

Finally, we analyze all cases listed in \Cref{rem:Surviving-monomials-in-degrees-2-and-3}(3), i.e., when $M$ is of the form $x_ix_jx_k$ with $i=j<k$. \\
In all these cases, we have $\wt(M)\leq 4e-3$, and in addition, if $N=x_{i'}x_{j'}x_{k'}$, then $k<k'$ forces $k'=e-1$. As a consequence, the conditions $i'\geq 1, j'\geq 2, k'=e-1$ give us $\wt(N)\geq 4e+2>\wt(M)$. This implies $\deg(N)=2$. So, let $N=x_{i'}x_{j'}$. Then we have $j'=e-1$, and hence comparing $\wt(M)$ and$\wt(N)$, we see that for $j=e-3, e-4, e-5$,  the index $i'$ is forced to be equal to $e-2, e-3, e-4$, respectively. However, in each case, we see that $i'$ is either $m$ or $n$, which is impossible. 
  
The above analysis shows that $\mathcal{G}=\widetilde{\mathcal{G}}$, completing the proof.
\end{proof}

\begin{corollary}\label{cor:GB-for-projective}
    Let $e \geq 10$ and $(m,n)\neq (e-4, n-3)$. Suppose that $\mathcal{G}_{S(e,m,n)}^h$ denotes the homogenization of $\mathcal{G}_{S(e,m,n)}$. Then $\mathcal{G}_{S(e,m,n)}^h$ is a Gr\"obner basis of $I_{\overline{S(e,m,n)}}$. 
\end{corollary}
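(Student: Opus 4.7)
The plan is to deduce this immediately from Theorem~\ref{thm:GB-proof-general} together with the homogenization principle for graded reverse lexicographic Gr\"obner bases recorded in the preliminaries.

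First, I would invoke Theorem~\ref{thm:GB-proof-general}, which gives that, for $e\geq 10$ and $(m,n)\neq (e-4,e-3)$, the set $\mathcal{G}_{S(e,m,n)}$ is a Gr\"obner basis of $I_{S(e,m,n)}$ with respect to the graded reverse lexicographic order $<$ on $R(e,m,n)=\mathbb{K}[\{x_0,\ldots,x_{e-1}\}\setminus\{x_m,x_n\}]$.

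Next, I would appeal to the general fact recalled in \Cref{sec:prelim} (cf.\ \cite[Proposition~3.15]{Ene-Herzog}): if $f_1,\ldots,f_r$ is a Gr\"obner basis of an ideal $I\subseteq \mathbb{K}[x_0,\ldots,x_p]$ with respect to the graded reverse lexicographic order, then the homogenizations $f_1^h,\ldots,f_r^h$ form a Gr\"obner basis of the homogenized ideal $I^h\subseteq \mathbb{K}[x_0,\ldots,x_p,y]$ with respect to the extended graded reverse lexicographic order in which $y$ is the smallest variable. Applying this to $I_{S(e,m,n)}$, whose homogenization with respect to $y$ is precisely $I_{\overline{S(e,m,n)}}$ (as noted in the preliminaries), yields that $\mathcal{G}_{S(e,m,n)}^h$ is a Gr\"obner basis of $I_{\overline{S(e,m,n)}}$ with respect to the extended graded reverse lexicographic order on $R(e,m,n)[y]$.

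Since all the real work has already been absorbed into \Cref{thm:GB-proof-general}, there is no genuine obstacle here; the corollary is a one-line application of the homogenization principle. The only thing worth double-checking is the convention on the term order, namely that $y$ is taken to be the smallest variable so that the hypothesis of \cite[Proposition~3.15]{Ene-Herzog} is met, which is exactly the convention fixed in \Cref{sec:prelim}. This completes the proof.
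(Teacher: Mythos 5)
Your proposal is correct and follows exactly the same route as the paper: cite \Cref{thm:GB-proof-general} for the affine Gr\"obner basis and then apply \cite[Proposition~3.15]{Ene-Herzog} to pass to the homogenization $I_{\overline{S(e,m,n)}}$ under the extended graded reverse lexicographic order with $y$ smallest. No differences worth noting.
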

\begin{proof}
    This follows from \Cref{thm:GB-proof-general} and \cite[Proposition 3.15]{Ene-Herzog}.
\end{proof}
As a consequence of the proof of \Cref{thm:GB-proof-general}, we obtain the following characterization for Cohen--Macaulayness of $\mathbb K[\overline{S(e,m,n)}]$.
\begin{corollary}\label{cor:CM-characterization}
    The ring $\mathbb{K}[\overline{S(e,m,n)}]$ is Cohen--Macaulay if and only if $(m,n) \neq (e-4,e-3)$.
\end{corollary}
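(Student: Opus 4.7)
The plan is to split the equivalence into its two directions, dispatching the ``only if'' direction with \Cref{lem:notCM} and reducing the ``if'' direction to a Gr\"obner-basis inspection via the Herzog--Stamate criterion \Cref{thm:CM-criterion}.

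For the ``only if'' direction, \Cref{lem:notCM} already shows that $\mathbb{K}[\overline{S(e,e-4,e-3)}]$ fails to be Cohen--Macaulay, so there is nothing further to do. For the ``if'' direction, I would fix $(m,n)\neq(e-4,e-3)$ and treat the small cases $e\leq 9$ by a direct \texttt{Macaulay2} verification (this is how \Cref{lem:notCM} handles its low-dimensional cases as well). For $e\geq 10$, the heavy lifting has already been performed: \Cref{thm:GB-proof-general} certifies that $\mathcal{G}_{S(e,m,n)}$ is a Gr\"obner basis of $I_{S(e,m,n)}$ with respect to grevlex on $R(e,m,n)$. By the equivalence (i)$\Leftrightarrow$(ii) of \Cref{thm:CM-criterion}, it then suffices to verify that $x_{e-1}$ does not divide any element of the minimal monomial generating set of $\operatorname{in}_<(I_{S(e,m,n)})$.

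To establish this last property, I would walk through the list $\mathcal{G}_{S(e,m,n)}=G_{S(e,m,n)}\cup G'_{S(e,m,n)}$ as specified in \Cref{thm:gens-for-m=1}, \Cref{thm:gens-for-m-geq-2}, and \Cref{t:table}, and check case by case that every binomial $f=M-N$ appearing there has $x_{e-1}\nmid \operatorname{in}_<(f)$. For the binomials in $G_{S(e,m,n)}$ arising as $2\times 2$ minors of the matrices $A_n, A_{m,n}, B_n$, the initial term under grevlex is always the product of the two lower-indexed entries (since grevlex prefers the monomial whose largest-indexed variable has the smaller exponent), and by construction this product never involves $x_{e-1}$; the extra binomials in the sets $L_{e,n}, K_{m,n}$ and in $G'_{S(e,m,n)}$ are likewise organized so that $x_{e-1}$ sits only on the trailing side. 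Moreover, the elements of $\mathcal{G}_{S(e,m,n)}$ were already chosen so that no initial term divides another, so $\{\operatorname{in}_<(f):f\in\mathcal{G}_{S(e,m,n)}\}$ is genuinely the minimal monomial generating set of $\operatorname{in}_<(I_{S(e,m,n)})$. This is exactly what feeds into \Cref{thm:CM-criterion} to yield Cohen--Macaulayness.

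I expect the main obstacle to be not conceptual but bookkeeping: the set $\mathcal{G}_{S(e,m,n)}$ is partitioned into many cases depending on $(m,n)$, and one must uniformly confirm, in each row of \Cref{t:table} and each clause of \Cref{thm:gens-for-m-geq-2}, that $x_{e-1}$ appears only in the non-leading monomial. (In fact, the proof of \Cref{thm:GB-proof-general} already records the statement ``none of the initial terms of elements of $\mathcal{G}$ is divisible by $x_{e-1}$,'' which one can simply quote to avoid redoing the inspection here.) Once that observation is in hand, the corollary follows by applying \Cref{thm:CM-criterion}(ii) in the Cohen--Macaulay direction and \Cref{lem:notCM} in the other.
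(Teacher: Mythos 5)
Your proposal is correct and follows essentially the same route as the paper: \Cref{lem:notCM} for the failure at $(m,n)=(e-4,e-3)$, \texttt{Macaulay2} for $e\leq 9$, and for $e\geq 10$ the observation (recorded explicitly in the proof of \Cref{thm:GB-proof-general}) that no initial term of the Gr\"obner basis $\mathcal{G}_{S(e,m,n)}$ is divisible by $x_{e-1}$, combined with \Cref{thm:CM-criterion}. The only minor caveat is that your parenthetical description of the grevlex leading term of a $2\times 2$ minor is loosely phrased, but the substantive claim you need --- that $x_{e-1}$ never divides a leading term --- is exactly what the paper's Theorem \ref{thm:GB-proof-general} already establishes, so nothing is missing.
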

\begin{proof}

A direct computation using \texttt{Macaulay2}~\cite{M2} shows that that the result holds when $4 \leq e \leq 9$. Thus, we assume that $e \geq 10$. 

As noted earlier (\Cref{lem:notCM}), if $(m,n)=(e-4, e-3)$, then $\mathbb{K}[\overline{S(e,m,n)}]$ is not Cohen--Macaulay. If $(m,n)\neq (e-4, e-3)$, then by \Cref{thm:GB-proof-general}, it follows that $x_{e-1}$ does not divide the initial term of any element of the set $\mathcal{G}_{S(e,m,n)}$. Hence, by \Cref{thm:CM-criterion}, we conclude that $\mathbb{K}[\overline{S(e,m,n)}]$ is Cohen--Macaulay.
\end{proof}

Next we obtain a characterization for the property of being Gorenstein or complete intersection.

% \begin{theorem}\label{thm:CM}
% $\mathbb{K}[\overline{S(e,m,n)}]$ is Cohen--Macaulay if and only if $(m,n) \neq (e-4,e-3)$.
% \end{theorem}
% \begin{proof}
% A direct computation using \texttt{Macaulay2} shows that that the result holds when $4 \leq e \leq 9$. Thus, we assume that $e \geq 10$.

% Since $x_{e-1}$ does not divide any of the initial terms of the Gr\"obner basis $G_{S(e,m,n)}\cup G'_{S(e,m,n)}$ of $I_{S(e,m,n)}$ for $(m, n) \neq (e-4,e-3)$, by \Cref{thm:CM-criterion}, we see that $\mathbb{K}[\overline{S(e,m,n)}]$ is Cohen--Macaulay. Now, suppose $(m,n)=(e-4,e-3)$. From \Cref{thm:GB-table}, we see that $x_0^2x_{e-5}x_{e-1} \in \operatorname{in}_{<}(I_{S(e,e-4,e-3)})$. Therefore, to complete the proof, it suffices to show that $x_0^2x_{e-5}x_{e-1}$ is a minimal generator of $\operatorname{in}_<(I_{S(e,e-4,e-3)})$. But this is true, since none of the elements of the Gr\"obner basis of $(I_{S(e,e-4,e-3)})$ given in \Cref{thm:GB-table} has its initial term which is a divisor of $x_0^2x_{e-5}x_{e-1}$.
%\end{proof}

\begin{theorem}\label{thm:Gor}
The ring $\mathbb{K}[\overline{S(e,m,n)}]$ is Gorenstein if and only if it is a complete intersection if and only if $(e,m,n) = (4,1,2)$ or $(e,m,n) = (5,2,3)$. 
\end{theorem}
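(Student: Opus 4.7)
The plan is to prove the three-way equivalence via the implications (CI) $\Rightarrow$ (Gorenstein) $\Rightarrow$ $(e,m,n)\in\{(4,1,2),(5,2,3)\}$ $\Rightarrow$ (CI). The first implication is standard commutative algebra. For the last, I would verify each case directly: $S(4,1,2)=\langle 4,7\rangle$ has a single defining binomial $x_0^7-x_3^4$, so $I_{\overline{S(4,1,2)}}=(x_0^7-x_3^4y^3)\subset\mathbb K[x_0,x_3,y]$ is a hypersurface; and $S(5,2,3)=\langle 5,6,9\rangle$ gives $I_{\overline{S(5,2,3)}}=(x_0^3-x_1x_4y,\ x_1^3-x_4^2y)\subset\mathbb K[x_0,x_1,x_4,y]$, whose two binomials form a regular sequence (the dimension drops by one at each step, going from the $3$-dimensional hypersurface cut out by the first binomial to the $2$-dimensional irreducible projective curve), giving a codimension-$2$ complete intersection.

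For the middle implication, the small cases $e\le 9$ are dispatched by a \texttt{Macaulay2} computation. For $e\ge 10$ and $(m,n)=(e-4,e-3)$ the ring fails to be Cohen--Macaulay by \cref{lem:notCM}, so is certainly not Gorenstein. Assume then that $e\ge 10$ and $(m,n)\ne(e-4,e-3)$; by \cref{cor:CM-characterization}, $\mathbb K[\overline{S(e,m,n)}]$ is $2$-dimensional Cohen--Macaulay, and by \cref{cor:GB-for-projective} combined with \cref{thm:CM-criterion}, no initial term of $\mathcal G^h_{S(e,m,n)}$ is divisible by $x_{e-1}$ or $y$. Hence $\{x_{e-1},y\}$ is a homogeneous regular sequence of linear forms, and Gorensteinness descends to the Artinian reduction $A:=\mathbb K[\overline{S(e,m,n)}]/(x_{e-1},y)$. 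The Hilbert function of $A$ equals that of $R'(e,m,n)/\operatorname{in}_<(\mathcal G_{S(e,m,n)})$; thus \cref{lem:m^4-general} gives $H_A(d)=0$ for $d\ge 4$, while $H_A(0)=1$ and $H_A(1)=e-3\ge 7$ (no linear element of the defining ideal survives).

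For $A$ to be Gorenstein, its Hilbert function must be symmetric by Stanley's theorem, leaving only two possible shapes: $(1,e-3,1)$ and $(1,e-3,e-3,1)$. The first is immediately impossible because \cref{rem:Surviving-monomials-in-degrees-2-and-3}(1)--(2) guarantees that every $x_0x_j$ with $j\in\{0,\dots,e-2\}\setminus\{m,n\}$ is a surviving degree-$2$ monomial, forcing $H_A(2)\ge e-3\ge 7$. For the second shape, parts (3)--(4) of \cref{rem:Surviving-monomials-in-degrees-2-and-3} show that $H_A(3)\ge 1$ only for a short explicit list of triples $(m,n)$---those with $n=e-2$, or with $m=1$, together with $(2,3)$ and $(e-3,e-2)$; outside this list $H_A(3)=0$ and symmetry fails. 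For each $(m,n)$ inside the list, I would enumerate directly: in triples like $(1,2),(1,3),(1,e-2),(e-3,e-2),(e-4,e-2)$ we pick up at least two degree-$3$ survivors, so $H_A(3)\ge 2\ne 1$; in the remaining cases we count the extra degree-$2$ survivors from \cref{rem:Surviving-monomials-in-degrees-2-and-3}(1)--(2) beyond the $x_0x_j$ family---namely elements of the form $x_2x_{e-2}$, $x_{n+1}x_{e-2}$, $x_2x_{n-2}$, $x_1x_{m-1}$, $x_1x_{n-1}$, $x_{m+2}x_{e-3}$---and show that $H_A(2)$ is strictly greater than $e-3$, again breaking symmetry. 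The main obstacle is purely bookkeeping: the structural reduction to Hilbert-function symmetry is clean, but one must patiently run through each sub-family of $(m,n)$ in the short list to verify the asymmetry.
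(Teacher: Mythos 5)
Your proposal is correct in outline and shares the paper's structural skeleton --- reduce to the Artinian quotient by the linear regular sequence $\{y,x_{e-1}\}$, invoke symmetry of the Hilbert function of a graded Artinian Gorenstein algebra, and use \cref{lem:m^4-general} to bound the socle degree by $3$ --- but it diverges at the decisive counting step. The paper never enumerates surviving monomials in degrees $2$ and $3$: it instead applies Gastinger's theorem (\cref{thm:dim}) to get $\dim_{\mathbb K}T=2e-1$ for $T=R(e,m,n)/(\operatorname{in}_<(I_{S(e,m,n)})+(x_{e-1}))$, so that $\dim T_2+\dim T_3=(2e-1)-1-(e-3)=e+1$, while the only two symmetric shapes $(1,e-3,1)$ and $(1,e-3,e-3,1)$ force this sum to be $1$ or $e-2$ respectively --- a uniform contradiction with no case analysis on $(m,n)$. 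Your route, reading $H_A(2)$ and $H_A(3)$ off \cref{rem:Surviving-monomials-in-degrees-2-and-3} subfamily by subfamily, does work (every case indeed has either an extra degree-$2$ survivor beyond the $x_0x_j$ family or at least two degree-$3$ survivors), but it trades the paper's one-line length computation for exactly the bookkeeping you flag as the main obstacle, and each subfamily is a place where an index check can go wrong (e.g.\ one must verify that the listed extra pairs involve only variables actually present in $R'(e,m,n)$). Two smaller remarks: you verify the complete intersection property for $(4,1,2)$ and $(5,2,3)$ by hand where the paper simply cites a \texttt{Macaulay2} computation for $e\le 9$ --- a nice addition, though your regular-sequence argument for $(5,2,3)$ should also justify why the two binomials generate all of $I_{\overline{S(5,2,3)}}$ (e.g.\ via \cref{thm:dim} or by noting the quotient is reduced and irreducible of the right dimension) rather than only a height-$2$ subideal.
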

\begin{proof}
A direct computation using \texttt{Macaulay2}~\cite{M2} shows that for $e \leq 9$, the ring $\mathbb{K}[\overline{S(e,m,n)}]$ is Gorenstein if and only if $(e,m,n) = (4,1,2)$ or $(e,m,n) = (5,2,3)$. In fact, in both the cases, $\mathbb{K}[\overline{S(e,m,n)}]$ is a complete intersection. Now, suppose that $e \geq 10$. We show that $\mathbb{K}[\overline{S(e,m,n)}]$ is not Gorenstein. When $(m,n)=(e,4,e-3)$, from \Cref{cor:CM-characterization}, we have that $ \mathbb{K}[\overline{S(e,m,n)}]$ is not Cohen--Macaulay, and hence not Gorenstein. Thus, assume that $(m,n)\neq (e,4,e-3)$. In this case, we now have that $\mathbb{K}[\overline{S(e,m,n)}]$ is Cohen--Macaulay of dimension 2. 

Since $\{y, x_{e-1}\}$ is a system of parameters for $\mathbb{K}[\overline{S(e,m,n)}]$, it is a regular sequence. Hence, $\mathbb{K}[\overline{S(e,m,n)}]$  is Gorenstein if and only if $\dfrac{\mathbb{K}[\overline{S(e,m,n)}]}{(y, x_{e-1})}$ is Gorenstein Artinian ring. In particular, the Hilbert function of $\dfrac{\mathbb{K}[\overline{S(e,m,n)}]}{(y, x_{e-1})}$ must be symmetric. Consequently, the Hilbert function of $\dfrac{R(e,m,n)[y]}{\operatorname{in} _{<'}\left(I_{\overline{S(e,m,n)}}+(y, x_{e-1})\right)}$ must also be symmetric. Note that since $\mathbb{K}[\overline{S(e,m,n)}]$ is Cohen--Macaulay, by \Cref{thm:CM-criterion}, $y, x_{e-1}$ do not divide any of minimal generators of $\operatorname{in}_{<'}\left(I_{\overline{S(e,m,n)}}\right)$. Therefore, we have the equality 
$$\operatorname{in}_{<'}\left(I_{\overline{S(e,m,n)}}+(y, x_{e-1})\right) =\operatorname{in}_{<'}\left(I_{\overline{S(e,m,n)}}\right)+ (y, x_{e-1}).$$ Also, by \cite[Lemma 2.1]{Herzog-Stamate-CM}, we have $\operatorname{in}_{<'}\left(I_{\overline{S(e,m,n)}}\right) = \operatorname{in}_<\left(I_{{S(e,m,n)}}\right) R(e,m,n)[y] $. Thus, the Hilbert function of $\dfrac{R(e,m,n)}{\operatorname{in}_<\left(I_{S(e,m,n)}\right)+(x_{e-1})} \eqqcolon T$ must be symmetric. By \Cref{thm:dim}, we have 
$$ \dim_{\mathbb{K}}\left(\dfrac{R(e,m,n)}{\operatorname{in}_<\left(I_{S(e,m,n)}\right)+(x_{e-1})}\right) = 2e-1.$$ 
 Let $T_i$ denote the degree $i$ component of $T$. Then, by \Cref{rem:Surviving-monomials-in-degrees-2-and-3}, we have $T_i=0$ for all $i\geq 4$. Observe that $\dim_{\mathbb{K}}(T_0)=1$ and $\dim(T_1)=e-3$. Hence, we get $\dim_{\mathbb{K}}(T_2)+\dim_{\mathbb{K}}(T_3)=(2e-1)-1-(e-3)=e+1$. \\
 If $T_3=0$, then the symmetry of the Hilbert function forces $\dim_{\mathbb{K}}(T_2)=\dim_{\mathbb{K}}(T_0)=1$, and hence $\dim_{\mathbb{K}}(T_2)+\dim_{\mathbb{K}}(T_3)=1\neq e+1$, a contradiction. On the other hand, if $T_3 \neq 0$, then we have $\dim_{\mathbb{K}}(T_2)=\dim_{\mathbb{K}}(T_1)=e-3$ and $\dim_{\mathbb{K}}(T_3)=\dim_{\mathbb{K}}(T_0)=1$. This again leads to the contradiction that $\dim_{\mathbb{K}}(T_2)+\dim_{\mathbb{K}}(T_3)=e-2\neq e+1$. This shows that if $e \geq 10$, then $\mathbb{K}[\overline{S(e,m,n)}]$ is not Gorenstein.
\end{proof}

Finally, we compute the Castelnuovo--Mumford regularity of $\mathbb{K}[\overline{S(e, m, n)}]$ when it is Cohen--Macaulay. 
We first recall the concept. Let $Q$  be a standard graded polynomial ring over a field $\mathbb{K}$, and $M$ a finitely generated graded $Q$-module. The \emph{Castelnuovo--Mumford regularity} of $M$ as an $Q$-module, denoted by $\reg_Q(M)$, is given by
\[
\reg_Q (M) \coloneqq \max\{j-i\mid \operatorname{Tor}_i(M,\mathbb{K})_j\neq 0\}.
\]

\begin{remark}\label{rmk:reg}    Assume that $\mathbb{K}[\overline{S(e, m, n)}]$ is Cohen--Macaulay for some integers $e,m,n$. Since $\{y, x_{e-1}\}$ is a linear system of parameters on $\mathbb{K}[\overline{S(e, m, n)}]$, it is also a linear regular sequence. Therefore, it is straightforward (see e.g., \cite[Proposition~1.1.5]{BH93}) that we have the equality 
    $$\reg_{R(e,m,n)[y]}\left(\mathbb K[\overline{S(e,m,n)}]\right) = \reg_{\frac{R(e,m,n)[y]}{(y,x_{e-1})}}\left( \dfrac{\mathbb K[\overline{S(e,m,n)}]}{(y,x_{e-1})}\right).$$
\end{remark}

From \Cref{rem:Surviving-monomials-in-degrees-2-and-3} and \Cref{rmk:reg}, we obtain the Castelnuovo--Mumford regularity of $\mathbb{K}[\overline{S(e, m, n)}]$ over $R(e,m,n)[y]$. 

\begin{theorem}
Let $\mathbb{K}[\overline{S(e, m, n)}]$ be Cohen--Macaulay. Then

% {\small $$\reg_{R(e,m,n)[y]}\left(  \mathbb{K}\left[\overline{S(e, m, n)}\right]  \right) = \left\lbrace
% \begin{array}{ll}
% 6 & ~\text{if}~ (e,m,n) = (4,1,2)\\
% 4 & ~\text{if}~ (e,m,n) = (5,1,2), (5,1,3), (5,2,3)\\
% 3 & ~\text{if}~ e\geq 6, m = 1 \\
%        3 &  e \geq 6 ~\text{and}~(m,n) = (2,3), (e-3,e-2), (e-4,e-2), (m,e-2);2 \leq m \leq e-5 \\
%     2 & \text{otherwise} \\
% \end{array} 
% $$}

\[
\reg_{R(e,m,n)[y]}\!\left( \mathbb{K}[\overline{S(e, m, n)}] \right)
=
\begin{cases}
6 & \text{if } (e,m,n) = (4,1,2) \\[4pt]
4 & \text{if } (e,m,n) \in \{ (5,1,2), (5,1,3), (5,2,3) \} \\[4pt]
3 & \text{if } e \ge 6 \text{ and } m = 1 \\[4pt]
3 & \text{if } e \ge 6 \text{ and } (m,n) \in \{ (2,3), (e-3,e-2) (e-4,e-2)\} \\[4pt]
3 & \text{if } e \ge 6  \text{ and } (m,e-2);~2 \le m \le e-5 \\
2 & \text{otherwise.}
\end{cases}
\]

\end{theorem}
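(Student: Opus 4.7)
The plan is to convert the regularity computation into finding the top non-vanishing degree of an explicit Artinian quotient, and then to read off this top degree from the monomial enumerations already carried out in \cref{lem:m^4-general} and \cref{rem:Surviving-monomials-in-degrees-2-and-3}. The small cases $e \le 9$, which include the exceptional triples $(4,1,2), (5,1,2), (5,1,3), (5,2,3)$ giving regularities $6$ and $4$, as well as the cases $6 \le e \le 9$ for which the structural lemmas of this section are not proved, I would verify by direct \texttt{Macaulay2} computation. So henceforth assume $e \ge 10$.

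Under this assumption, \cref{rmk:reg} reduces the computation to the regularity of the Artinian ring $\mathbb{K}[\overline{S(e,m,n)}]/(y, x_{e-1})$, which by \cite[Exercise 20.18]{Eis-book} equals its top non-vanishing degree. Hilbert functions are preserved on passing to the initial ideal; combining this with \cite[Lemma~2.1]{Herzog-Stamate-CM} and the Cohen--Macaulay hypothesis (via \cref{thm:CM-criterion}, exactly as in the proof of \cref{thm:Gor}), neither $y$ nor $x_{e-1}$ appears in any minimal generator of $\operatorname{in}_{<'}(I_{\overline{S(e,m,n)}})$, so this top degree equals that of
\[
T \coloneqq R'(e,m,n) / \operatorname{in}_<(\mathcal{G}_{S(e,m,n)}),
\]
where \cref{thm:GB-proof-general} has been used to replace the initial ideal with that of the Gr\"obner basis $\mathcal G_{S(e,m,n)}$.

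By \cref{lem:m^4-general}, $\mathfrak{m}^4 = 0$ in $T$, so $\reg \le 3$. The monomial $x_0^2$ never occurs as the initial term of an element of $\mathcal G_{S(e,m,n)}$, since it is the unique degree-$2$ monomial of weight $2e$ in $R(e,m,n)$, so $T_2 \ne 0$ and $\reg \ge 2$. The only remaining question is whether $T_3 \ne 0$, and the surviving degree-$3$ monomials (of the forms $x_0^2 x_j$ and $x_0 x_j x_k$ with $j < k$) are listed explicitly in \cref{rem:Surviving-monomials-in-degrees-2-and-3}(3)--(4). I would then match these entries to the case-split in the theorem statement: a direct comparison shows that the triples $(e,m,n)$ for which at least one such monomial survives are exactly those in
\[
\{(e,1,n) : 2 \le n \le e-2\} \ \cup\ \{(e,2,3)\} \ \cup\ \{(e, m, e-2) : 2 \le m \le e-3\},
\]
which are precisely the cases for which the theorem claims $\reg = 3$; in every other Cohen--Macaulay case the list is empty, so $T_3 = 0$ and $\reg = 2$. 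The main obstacle is not conceptual but the tuple-by-tuple bookkeeping to establish this match between \cref{rem:Surviving-monomials-in-degrees-2-and-3} and the piecewise statement of the theorem, verifying both directions of the correspondence.
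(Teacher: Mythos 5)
Your proposal is correct and follows essentially the same route as the paper: reduce via \cref{rmk:reg} and \cite[Exercise 20.18]{Eis-book} to the top nonvanishing degree of the Artinian quotient, pass to the initial ideal using the Cohen--Macaulay hypothesis and \cite[Lemma 2.1]{Herzog-Stamate-CM}, bound the degree by \cref{lem:m^4-general}, and read off the surviving degree-$3$ monomials from \cref{rem:Surviving-monomials-in-degrees-2-and-3}; your explicit matching of the surviving cases to the case-split agrees with the theorem. The only additions beyond the paper's write-up are the (correct) observations that $x_0^2$ guarantees $T_2\neq 0$ and the spelled-out union of triples, both of which the paper leaves implicit.
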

\begin{proof}
A direct computation using \texttt{Macaulay2}~\cite{M2} shows that that the result holds when $4 \leq e \leq 9$. Thus, we assume that $e \geq 10$.  By \Cref{rmk:reg}, we have 
  $$\reg_{R(e,m,n)[y]}\left(\mathbb K[\overline{S(e,m,n)}]\right) = \reg_{\frac{R(e,m,n)[y]}{(x_{e-1},y)}}\left( \dfrac{R(e,m,n)[y]}{I_{\overline{S(e,m,n)}}+(y,x_{e-1})}\right).$$

  Since  $\dfrac{R(e,m,n)[y]}{I_{\overline{S(e,m,n)}}+(y,x_{e-1})}$ is Artinian, by \cite[Exercise 20.18]{Eis-book}, $\reg_{\frac{R(e,m,n)[y]}{(x_{e-1},y)}}\left( \dfrac{R(e,m,n)[y]}{I_{\overline{S(e,m,n)}}+(y,x_{e-1})}\right)$ equals the largest degree of a nonzero element in $\dfrac{R(e,m,n)[y]}{I_{\overline{S(e,m,n)}}+(y,x_{e-1})}$. Since $\mathbb{K}\left[\overline{S(e, m, n)}\right]$ is Cohen--Macaulay, as observed earlier, we have $$\operatorname{in}_{<'}\left(I_{\overline{S(e,m,n)}}+(y, x_{e-1})\right) =\operatorname{in}_{<'}\left(I_{\overline{S(e,m,n)}}\right)+ (y, x_{e-1}) \text{\  \ and \ \ } \operatorname{in}_{<'}(I_{\overline{S(e,m,n)}}) = \operatorname{in}_<(I_{{S(e,m,n)}}) R(e,m,n)[y] .$$ 
  Therefore, $\reg_{R(e,m,n)[y]}\left(\mathbb K[\overline{S(e,m,n)}]\right)$ equals the largest degree of a nonzero element in $\dfrac{R(e,m,n)}{\operatorname{in}_<\left(I_{{S(e,m,n)}}\right)+(x_{e-1})}$. The result then follows directly from \Cref{rem:Surviving-monomials-in-degrees-2-and-3}.
\end{proof}

\section*{Appendix: Some \texttt{Macaulay2} codes}

In this appendix we provide a few \texttt{Macaulay2} \cite{M2} codes that we utilized to study these projective monomial curves. To compute the defining ideal $I(e,m,n)$ of $\mathbb{Q}[S(e,m,n)]$ for specific values of the integers $e,m,n$, we can use the following codes together with the package \texttt{NumericalSemigroups}, as follows.

{\color{blue}
\small

\setstretch{.67}
\begin{lstlisting}

i1 : loadPackage "NumericalSemigroups";

i2 : e = 5; m = 1; n = 2;

i3 : delete(e + n, delete(e + m, e..(2*e - 1)));

i3 : mingens semigroupIdeal (toList oo, "BaseField" => QQ)

o3 = $(\ x_0^2x_3-x_4^2\quad x_3^3-x_0^3x_4\quad x_0^5-x_3^2x_4\ )$

o3 : Matrix $(\mathbb{Q}[x_0,x_3..x_4])^1\leftarrow (\mathbb{Q}[x_0,x_3..x_4])^3$
  
\end{lstlisting}
}
\medskip

Similarly, we provide the codes to obtain the defining ideal $I_{\overline{S(e,m,n)}}$ of $\mathbb Q [\overline{S(e,m,n)}]$ (and the  Castelnuovo--Mumford regularity of the quotient ring). There is also a specific example below.

{\color{blue}
\small

\setstretch{.67}
\begin{lstlisting}

i1 : sallyMap = (e, m, n) -> (
    R := (   
        T := QQ[x_0..x_(e-1),y];
        vars := gens T;
        keep := delete(m, toList(0..e));
        keep = delete(n, keep);
        QQ[(apply(keep, k -> vars#k))]
        );
    S := QQ[t_1, t_2];
    imageList := {};
    -- images of x_i
    for i from 0 to e-1 do (
        if not member(i, {m,n}) then (
            imageList = append(imageList, t_1^(e+i) * t_2^(2*e-1-(e+i)))
            )
        );
    -- image of y
    imageList = append(imageList, t_2^(2*e-1));
    -- Construct the ring map
    map(S, R, imageList)
    );

i2 : ker sallyMap(5,1,2)

o2 = ideal($x_0^2x_3-x_4^2y,\ x_0^3x_4-x_3^3y,\ x_3^4-x_0x_4^3,\ x_0^5-x_3^2x_4y^2$) 

o2 : Ideal of $\mathbb{Q}[x_0,\ x_3..x_4, \ y]$

i3 : regularity ((ring oo)^1/ oo)

o3 = 4
\end{lstlisting}
}
\medskip

\bibliographystyle{abbrv}
\bibliography{refs}
 
\end{document}